\documentclass[11pt,a4paper]{amsart}

\usepackage{graphicx}
\usepackage{enumerate}
\usepackage{amsmath, amssymb, amsthm, amscd}
\usepackage{pst-all}
\usepackage{url}
\usepackage{caption}
\usepackage{xcolor}
\usepackage{hyperref}

 \oddsidemargin  -0.04cm
 \evensidemargin -0.04cm
 \textwidth      16.59cm
 \textheight     21.94cm

\theoremstyle{plain}
\newtheorem{theorem}{Theorem}[section]

\newtheorem{proposition}[theorem]{Proposition}

\theoremstyle{definition}
\newtheorem{definition}[theorem]{Definition}

\theoremstyle{remark}
\newtheorem{remark}[theorem]{Remark}

\numberwithin{equation}{section}




\newcommand{\D}{\partial}

\newcommand{\Dlt}{\Delta t}

\newcommand{\Dt}{\partial_t}
\newcommand{\dvg}{\nabla \cdot }
\newcommand{\Dxm}{\partial_{x_m}}

\newcommand{\grd}{\nabla}

\newcommand{\ld}{\lambda}

\newcommand{\mbb}{\mathbb}

\newcommand{\mcal}{\mathcal}

\newcommand{\Norm}[1]{{\left\vert\kern-0.25ex\left\vert\kern-0.25ex\left\vert #1 
    \right\vert\kern-0.25ex\right\vert\kern-0.25ex\right\vert}}

\newcommand{\rf}{\mathrm{ref}}

\newcommand{\veps}{\varepsilon}




\makeindex             


\begin{document}

\title[AP SI-IMEX Schemes for the EP System]{High Order Asymptotic
  Preserving and Classical Semi-implicit RK Schemes for the
  Euler-Poisson System in the Quasineutral Limit}        

\author[Arun]{K. R. Arun}
\thanks{K.~R.~A.\ acknowledges Core Research Grant - CRG/2021/004078
  from Science and Engineering Research Board, Department of Science
  \& Technology, Government of India.} 
\address{School of Mathematics, Indian Institute of Science Education
  and Research Thiruvananthapuram, Thiruvananthapuram - 695551,
  India.} 
\email{arun@iisertvm.ac.in}

\author[Crouseilles]{N. Crouseilles}
\address{Univ Rennes, Inria Bretagne Atlantique (MINGuS) \& IRMAR UMR
  6625 \& ENS Rennes, France}  
\email{nicolas.crouseilles@inria.fr}

\author[Samantaray]{S. Samantaray}
\address{Centre for Applicable
  Mathematics, Tata Institute of Fundamental Research, Bangalore - 560065, India.} 
\email{saurav22@tifrbng.res.in}

\date{\today}

\subjclass[2010]{Primary 35L45, 35L60, 35L65, 35L67; Secondary 65M06,
  65M08}

\keywords{Compressible Euler-Poisson system, Incompressible Euler
  system, Quasineutral limit, SI-IMEX-RK schemes, Asymptotic
  preserving, Finite volume method}

\begin{abstract}
 In this paper, the design and analysis of high order accurate IMEX
 finite volume schemes for the compressible Euler-Poisson (EP)
 equations in the quasineutral limit is presented. As the quasineutral
 limit is singular for the governing equations, the time discretisation is
 tantamount to achieving an accurate numerical method. To this end,
 the EP system is viewed as a differential algebraic equation system
 (DAEs) via the method of lines. As a consequence of this vantage
 point, high order linearly semi-implicit (SI) time discretisation are
 realised by employing a novel combination of the direct approach used
 for implicit discretisation of DAEs and, two different classes of
 IMEX-RK schemes: the additive and the multiplicative. For
 both the time discretisation strategies, in order to account for
 rapid plasma oscillations in quasineutral regimes, the nonlinear
 Euler fluxes are split into two different combinations of stiff and 
 non-stiff components. The high order scheme resulting from the
 additive approach is designated as a classical scheme while the one
 generated by the multiplicative approach possesses the asymptotic
 preserving (AP) property. Time discretisations for the classical and
 the AP schemes are performed by standard IMEX-RK and SI-IMEX-RK
methods, respectively so that the stiff terms are treated implicitly
and the non-stiff ones explicitly. In order to discretise in space a
Rusanov-type central flux is used for the non-stiff part, and simple
central differencing for the stiff part. AP property is also established
for the space-time fully-discrete scheme obtained using the
multiplicative approach. Results of numerical experiments are
presented, which confirm that the high order schemes based on the
SI-IMEX-RK time discretisation achieve uniform second order
convergence with respect to the Debye length and are AP in the
quasineutral limit.
\end{abstract}

\maketitle
\section{Introduction}
\label{sec:Intro}
Physics classifies the matter in the universe broadly into four
different states: solids, liquids, gases and plasma. Even though the
fourth state is what constitutes almost the entire universe, its found
in rarity in our proximity. At the same time matter in the form of
plasma has far reaching scientific and industrial
relevance. Mathematically, a plasma is primarily studied based on
two different kinds of models: kinetic models and fluid models. Both
these fields are indeed very active areas of research and with time
new frontiers are being explored and established within them. Fluid
models have the density, the temperature and the mean velocity as
the base components under observation. Whereas kinetic models
incorporate the time evolution of a distribution function which gives
the probability of particles being in a given state in the
six-dimensional phase space at a given time \cite{BL91}. There are
undoubtedly situations where kinetic models are inevitable for the
application under consideration. But, at the same time there are many
situations in which fluid models are adequate enough. Owing to their
simplicity in dimensional requirements, fluids models are much easier
to numerically approximate and simulate. We refer the reader to
\cite{Che13, GKP19, HM13, KT86} for more details on the physics of
plasma, their mathematical models and analysis.  

In this paper we consider the one-fluid Euler-Poisson (EP) system
\cite{Deg13, DLV08, Fab92} which is a macroscopic hydrodynamic
model that simulates the dynamics of electrons in a plasma with
constant positive ion background. It is a rather simple fluid model,
albeit comprising of most of the numerical pathologies offered by more
complex models. Therefore, it serves as a perfect test bed for the
design and analysis of a new class of higher order methods. The
one-fluid EP system of equations, is in itself a simplification of a
more physically relevant plasma model, the two-fluid EP plasma model
\cite{Che13, Jue01}. The two-fluid EP system consists of equations for
the conservation of masses and momentum for each of electrons and
protons, which are coupled with a Poisson equation for the
electrostatic potential. This  constitutes a hyperbolic elliptic
coupling. The literature on the analysis of the EP system leading to
the existence and uniqueness of solution is quite vast; we refer the
interested reader to
\cite{Bez93,BK18,Gam93,Mak86,Mak92,Mak93,MP90,MU87,MN95} and the 
references cited therein, for an exposure of the same. For a detailed
analytical study of the EP system, the reader is referred to, e.g.\
\cite{CP98, PRV95}. The two fluid model gives rise to two important
physical scales: the Debye length and the electron plasma period; see 
\cite{Che13, KT86} for details. The Debye length $\ld_D$ is a measure
of the typical length scale of charge imbalances in plasma, and the
electron plasma period $\tau_p$ is the period of oscillations which
occur (due to electrostatic restoring force) when such imbalances of
charges take place. In the quasineutral region, the electric charges
vanish and simultaneously, the electron plasma period reduces
drastically. As a consequence, high frequency plasma oscillations are
generated. Classical  discretisation procedures fail to capture these
oscillations unless their space and time mesh sizes resolve the Debye
length and the electron plasma period. In other words, the space and
time steps, $\Delta x$ and $\Dlt$, have to be severely restricted by
\begin{equation}
  \label{eq:dxdt_qn}
  \Delta x \leq \lambda_D, \quad \Dlt \leq \tau_p.
\end{equation}
Explicit discretisation methods where the discretisation parameters
are constrained as in \eqref{eq:dxdt_qn} are naturally computationally
expensive and are more prone to developing instabilities. 

There is an abundance of literature on methods which attempt to
overcome the mesh size restrictions imposed by the presence of
quasineutral regimes in the flow. These methods are developed for
either kinetic models or fluid models. The literature in the case of
kinetic models is quite vast; see \cite{BF82, CLF_JCP_82, LCF_JCP_83,
  Mason_JCP_81, Mason_JCP_83, Mason_MTS_85, Mason_JCP_87, RD_JCP_91,
  RD_JCP_92, WBF_JCP_86} and the references therein. However, the
literature on fluid models is less plentifully rich. For some of the
noted developments focusing on fluid models, we refer to
\cite{CDW_JCP_99, Fab92, HN78, SM_IJNM_05, SL_JCP_03}. However, in
many practical problems of high relevance, one encounters the
co-existence of quasineutral and non-quasineutral regions within the
same problem. Therefore flow characteristics are multiscaled in nature.
When subjected to appropriate scalings the multiscale features of
plasma flows is often characterised by one or more singular
perturbation parameters in the governing equations, in a
non-dimensional form  \cite{CG00, DM08, GLM+01, GHR13, HL09, JLL09,
  JV07, Wan04}.The quasineutral limit is a singular limit of the EP
system. Owing to this multiscaled nature, numerical  
simulation of plasmas is still an eminent challenge for the scientific
community. It plagues numerical methods not only with outrageous
stability restrictions but also with inaccuracies in the asymptotic
regimes, especially close to the singular limit. This pathology yearns
for the need to address the numerical approximations of plasma models
in the asymptotic preserving (AP) framework \cite{Deg13, jin-ap,
  Jin12}. An AP scheme has two main characteristic features, exactly
aiming to remedy the above noted challenges associated with the 
numerical approximation of plasma flows, which are:
\begin{itemize}
\item its stability requirements are independent of the multiscale or
  a singular parameter;
\item in the limit of the singular parameter, the numerical scheme
  transforms itself to a scheme for the limit system.
\end{itemize}
It has been reported in the literature that the ability of a scheme to
be AP gets primarily dictated by the choice of the time integrator,
rather than the spatial approximation techniques. That being said, it
doesn't mean that caution shouldn't be exercised while choosing the
spatial discretisation. To this end, semi-implicit Runge-Kutta(RK) or 
Implicit Explicit (IMEX)-RK \cite{PR01, PR05} stiff ordinary differential
equation (ODE) integrators have played a dominating role in dispensing
the tasking duty of tackling the stiffness in the system, and the
limiting singular nature of the governing equations in hand. Recently
a new class of IMEX-RK schemes called semi-implicit (SI)-IMEX-RK
schemes for stiff ODEs has been developed in \cite{BFR16}. The
difference between the new class i.e. the SI-IMEX-RK schemes and the
earlier (additively partitioned)  IMEX-RK schemes \cite{PR01, PR05} is
that in case of the SI-IMEX-RK schemes there is no need to identify an
additive stiff and non-stiff part in the ODE. The stiff variable can be
determined implicitly. This gives a massive advantage in systems where
the stiff variables cannot be optimally split into summable
stiff-non-stiff parts. Moreover, in many cases they aid in obtaining
methods which are only linearly implicit.  

AP schemes based on IMEX time discretisation methods have found
widespread relevance in a wide array of problems with singular
parameters, such as: the low Mach  number Euler equation \cite{AS20, 
  BispArunLucaNoe, BMY17, BQR+19, CDK12, DT11, DLV17, 
  HJL12, Kle95, KBS+01, MRK+03, NBA+14, SBG+99, Tan12}, to name 
but a few. First order IMEX schemes for plasma flow problems have been
developed and studied in the literature, we refer the reader to
\cite{CDV_ASP_05, CDV07, Deg13, DLV08, Neg13} for a few developments
in this direction. However, the methods proposed in these references
are all first order accurate in time and space. In this paper we are
interested to develop a general platform to design high-order IMEX
schemes for plasma dynamics problems using the one-fluid EP model as a
prototype system. In \cite{DP13} authors have developed higher order
semi-implicit schemes based on additively partitioned IMEX-RK schemes
for hyperbolic kinetic models of plasma flows. Recently, SI-IMEX-RK
based high order schemes for hyperbolic-elliptic kinetic models of
plasma have been developed and analysed in \cite{FR16, FR17,FRZ21}. In
this paper our endeavour is to lay out a path way which facilitates
the design of high order semi-implicit schemes for hyperbolic-elliptic
coupled systems. To this end, via method of lines we perceive the coupled
system as an index-1 differential algebraic system (DAEs). The time
discretisation is designed by combining the direct approach
\cite{HW96} to design implicit RK schemes for DAEs with both the
additively partitioned IMEX-RK schemes and the SI-IMEX
schemes. Therefore this paper has also an underlying focus beyond the 
development of high-order schemes for the EP system i.e. this also
showcases an approach to design high-order semi-implicit schemes for
stiff DAEs as was done in \cite{PR01, PR05} for stiff ODEs, which is a
novelty in itself.

The two main challenges in getting higher order accuracy are the
following. First, the right hand side of the momentum equation
contains a term which is non-linear and only one of the components in
this non-linear product survives in the quasineutral limit. So, any AP
discretisation should treat that component implicitly. In order to
reduce the implicitness, the other component in the non-linear product
should be explicit. In other words, we have a product of explicit and
implicit terms, necessitating the use of SI-IMEX schemes instead of
IMEX. Second, the Poisson equation has to be enforced at every stage 
of the RK time stepping. Since the Poisson equation is time
independent, it has to be treated as an algebraic relation as far as
the time discretisation is concerned. Formal asymptotic analysis of
the schemes developed on both the additively partitioned IMEX-RK
and the SI-IMEX-RK platforms reveals that the former is not
asymptotically consistent, but the later is. This is in agreement with
the analysis presented in \cite{Deg13} regarding the first-order 
classical and AP schemes, of which the schemes designed in this paper
turn out to be higher order extensions.  

The rest of this paper is organised as follows. In
Section~\ref{ch:qn_sec2}, we present the governing equations which
consider the movement of the electrons within a constant non evolving
positively charged ion background, leading to the so-called one-fluid
EP system \cite{Deg13, DLV08}. Furthermore results
pertaining to asymptotic analysis of the one-fluid model from the
literature \cite{Deg13}, which aid in the analysis of the
numerical schemes, are also presented in this section. Particularly,
equivalent reformulations of the EP system which aid in obtaining the  
AP property of the schemes are furnished. In
Section~\ref{sec:ch_qn_time_sd}, a linearised version of the one
dimensional EP system is derived. Upon Fourier transformation in space
the linearised system turns out to be an index-1 DAEs
\cite{HW96}. Motivated by this the EP system is cast into a system of
index-1 DAEs via method of lines. A brief discussion on the direct
approach to design implicit RK schemes for index-1 DAEs, additively
partition IMEX-RK schemes \cite{PR01, PR05} and SI-IMEX-RK schemes
\cite{BFR16}, is also included in this section. In
Section~\ref{sec:ch_qn_time_semi-disc} we combine the direct
approach for index-1 DAEs with the additively partitioned IMEX-RK
schemes and SI-IMEX-RK schemes to obtain two classes of high-order
semi-implicit schemes for the index-1 DAEs, in in-turn for the EP
system. Section~\ref{sec:anal_TSD} is devoted to the asymptotic
analysis of the developed high-order schemes. Therein the additively
partitioned IMEX-RK schemes are shown to be not consistent with the
asymptotic limit system, whereas the SI-IMEX-RK scheme is shown to be
AP being consistent with the incompressible limit system. In
Section~\ref{sec:ep_space-time_fully-disc}, the space discretisation
technique is presented based on the finite volume framework in which a
Godunov-type scheme for the explicit fluxes and a simple central
differencing for the implicit gradient terms are used. The fully
discrete scheme is also proved to be asymptotically consistent. In
Section~\ref{sec:ep_numer_case_stud} some numerical case studies are
carried out to test and display the high-order accuracy, AP property
and the superior performance of the SI-IMEX-RK schemes to the IMEX-RK
schemes. Finally, the paper is closed in Section~\ref{sec:ch6_con}
with some conclusions and future plans for further extension of the
schemes to hyperbolic elliptic systems.  

\section{One-fluid Euler-Poisson System and its Quasineutral Limit}  
\label{ch:qn_sec2}
The one-fluid EP system is a hydrodynamic model of plasma electrons
under the action of an electrostatic force. The governing equations of
this model are the compressible Euler equations for the conservation
of mass and momentum, coupled with a Poisson equation for the electric
potential. The scaled non-dimensionalised system of equations reads:
\begin{align}
  \D_t \rho + \dvg q &= 0, \label{eq:ep_nd_mass}\\
  \D_t q + \dvg \left( \frac{q \otimes q}{\rho} \right) + 
  \nabla p(\rho) &= \rho \nabla \phi, \label{eq:ep_nd_mom}\\
  \veps^2 \Delta \phi &= \rho - 1. \label{eq:ep_nd_poi} 
\end{align} 
Here, the independent variables $t\geq 0$ and $x\in\mbb{R}^d$ are,
respectively, the time and space, and the dependent variables
$\rho(t,x)\geq 0, q(t,x) =\rho u, u(t,x) \in\mbb{R}^d$ and
$\phi(t,x)\in\mbb{R}$ are, respectively, the electron density,
electron momentum and electric potential. The parameter  $\veps :=
\lambda_D / x_{\rf}$ is a scaled Debye length; cf.\
\cite{Che13,KT86}. The pressure law in the momentum equation
\eqref{eq:ep_nd_mom} is taken  to be isentropic, i.e.\
$p(\rho)=\rho^\gamma$, with $\gamma>1$. Finally, the spatial operators
$\grd, \dvg$ and $\Delta$ are respectively, the gradient, divergence
and the Laplacian.  For a complete derivation of the above system; see
\cite{Deg13}. 

The one-fluid EP model considered here only takes into account the
negatively charged electrons with the non-dimensionalised scaled
negative charge being equal to $-1$. In the Poisson equation
\eqref{eq:ep_nd_poi}, the factor $1$ on the right hand side term
$\rho-1$ signifies that the plasma consists of a uniform ion
background with a constant ion density $1$ in terms of the scaled
dimensionless variables. The microscopic quantity $\veps$ plays the
role of a singular perturbation parameter; see also \cite{DLV08, Fab92} 
for more details. 

\subsection{Quasineutral Limit}
\label{ch:qn_qn_lt}

The rest of this section is devoted to a brief summary of some
analytical results pertaining to the quasineutral limit of the EP system
\eqref{eq:ep_nd_mass}-\eqref{eq:ep_nd_poi}, deemed relevant for the
analysis to be carried out later, along the lines of \cite{Deg13}. As
discussed in the introduction, the multi-scale nature of the EP system
is elucidated through the parameter $\veps$, when there is a disparity
in the scale of the physical Debye length compared to the macroscopic
length scale. The quasineutral limit model is obtained by letting
$\veps \to 0$ in the system
\eqref{eq:ep_nd_mass}-\eqref{eq:ep_nd_poi}. Further, when $\veps \ll
1$, the flow regime can be considered to be close to the quasineutral
limit.       

Let us assume that the dependent variables $\rho, q$ and $\phi$ yield
the following limits as $\veps \to 0$:
\begin{equation}
  \label{ch:qn_limit_all}
  \lim_{\veps \to 0} \rho = \rho_{(0)}, \quad 
  \lim_{\veps \to 0 } q = q_{(0)} = \rho_{(0)} u_{(0)}, \quad
  \lim_{\veps \to 0} \phi  = \phi_{(0)}.
\end{equation}
Passing to the limit $\veps \to 0$ in
\eqref{eq:ep_nd_mass}-\eqref{eq:ep_nd_poi}, cf.\ \cite{Deg13}, we
obtain the ensuing incompressible Euler system as the quasineutral
limit of the Euler-Poisson system,: 
\begin{align}
  \dvg u_{(0)} &= 0, \label{eq:ep_mass_lim} \\
  \D_t u_{(0)} + \nabla\cdot\left(u_{(0)} \otimes u_{(0)} \right)&= \nabla
                                                                   \phi_{(0)}, \label{eq:ep_mom_lim}\\  
  \rho_{(0)} &=1. \label{eq:ep_poi_lim_}
\end{align} 
Based on the above result we put together the following definition,
which also concerns itself with the choice of the appropriate set of
initial conditions for the EP system
\eqref{eq:ep_nd_mass}-\eqref{eq:ep_nd_poi} and, reveals the multi-scale
nature of the solution in accordance with the formal limits considered
in \eqref{ch:qn_limit_all}.  
\begin{definition}\label{defn:ep_wellprep}
  A triple $(\rho, q, \phi)$ of solution to the EP system
  \eqref{eq:ep_nd_mass}-\eqref{eq:ep_nd_poi}  is called well prepared
  if it admits the following form;
  \begin{align}
    \rho = \rho_{(0)} + \veps^2 \rho_{(2)}, & \quad q = q_{(0)} +
                                              \veps^2
                                              q_{(2)} \label{eq:ep_wp_data_ansz}
    \\ 
    \rho_{(0)} =  1, & \quad \dvg q_{(0)} =
                       0. \label{eq:ep_wp_data_constrnt} 
  \end{align}
\end{definition}
In the incompressible Euler limit system
\eqref{eq:ep_mass_lim}-\eqref{eq:ep_poi_lim_}, the hydrodynamic
pressure is identified as the negative of the limiting electric
potential $\phi_{(0)}$. It is a mixed elliptic-hyperbolic system for
the unknowns $u_{(0)}$ and $\phi_{(0)}$. The electric potential
$\phi_{(0)}$ plays the role a Lagrange multiplier for the
incompressibility constraint \eqref{eq:ep_mass_lim}. Taking divergence
of the momentum equation \eqref{eq:ep_mom_lim}, and using the
divergence-free condition \eqref{eq:ep_mass_lim} on the velocity
$u_{(0)}$, see also \cite{Bre_00}; yields the following elliptic equation
for $\phi_{(0)}$:
\begin{equation}\label{ep:lagrange_mul_phi}
  \Delta \phi_{(0)} = \nabla^2 \colon \left( u_{(0)} \otimes u_{(0)} \right).
\end{equation}
The equation \eqref{ep:lagrange_mul_phi} can be used to eliminate the
divergence constraint \eqref{eq:ep_mass_lim}. In fact, the
divergence-free condition \eqref{eq:ep_mass_lim} needs to be satisfied
only at time $t=0$. We summarise a few results from \cite{Deg13}
as follows.  
\begin{proposition}
  The following reformulated incompressible Euler system:
  \begin{align}
    \Delta \phi_{(0)} &= \nabla^2 \colon \left(u_{(0)} \otimes
                        u_{(0)} \right), \label{eq:ep_mass_ref_lim} \\
    \D_t u_{(0)} + \nabla\cdot\left(u_{(0)}\otimes u_{(0)}\right) &= \grd
                                                                    \phi_{(0)}, \label{eq:ep_mom_ref_lim}\\
    \rho_{(0)} &= 1, \label{eq:ep_poi_ref_lim} \\
    \dvg u_{(0)} (0)& = 0 \label{eq:ep_div_ref}
   \end{align}
  is equivalent to the incompressible Euler system
  \eqref{eq:ep_mass_lim}-\eqref{eq:ep_poi_lim_} for smooth solutions; 
  in the sense that the solution of one system satisfies the other
  system and vice versa.  
\end{proposition}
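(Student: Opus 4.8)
The plan is to establish the claimed equivalence by showing the two implications separately: that any smooth solution of the incompressible Euler system \eqref{eq:ep_mass_lim}-\eqref{eq:ep_poi_lim_} solves the reformulated system \eqref{eq:ep_mass_ref_lim}-\eqref{eq:ep_div_ref}, and conversely. The forward direction is essentially the computation already sketched immediately above the proposition: given a smooth solution $(u_{(0)}, \phi_{(0)})$ of \eqref{eq:ep_mass_lim}-\eqref{eq:ep_poi_lim_}, one takes the divergence of the momentum equation \eqref{eq:ep_mom_lim}. Since $\dvg$ commutes with $\Dt$ and with the spatial derivatives, and since $\dvg u_{(0)} = 0$ for all $t$, the term $\Dt(\dvg u_{(0)})$ vanishes, and $\dvg(\dvg(u_{(0)} \otimes u_{(0)})) = \nabla^2 \colon (u_{(0)} \otimes u_{(0)})$, while $\dvg(\grd \phi_{(0)}) = \Delta \phi_{(0)}$. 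This yields \eqref{eq:ep_mass_ref_lim}. Equations \eqref{eq:ep_mom_ref_lim}, \eqref{eq:ep_poi_ref_lim} are carried over verbatim, and \eqref{eq:ep_div_ref} is just the $t=0$ restriction of \eqref{eq:ep_mass_lim}, which holds a fortiori.

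The converse direction is the one requiring a short argument: starting from a smooth solution $(u_{(0)}, \phi_{(0)})$ of the reformulated system, I must recover the pointwise-in-time divergence constraint \eqref{eq:ep_mass_lim} from the weaker data, namely the elliptic relation \eqref{eq:ep_mass_ref_lim} together with the momentum equation and the initial condition \eqref{eq:ep_div_ref}. The natural approach is to derive an evolution equation for the quantity $w := \dvg u_{(0)}$. Taking the divergence of the momentum equation \eqref{eq:ep_mom_ref_lim} gives $\Dt w + \nabla^2 \colon (u_{(0)} \otimes u_{(0)}) = \Delta \phi_{(0)}$, and substituting the elliptic relation \eqref{eq:ep_mass_ref_lim} for $\Delta \phi_{(0)}$ makes the two second-order terms cancel, leaving $\Dt w = 0$. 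Hence $w(t,\cdot) = w(0,\cdot) = 0$ by \eqref{eq:ep_div_ref}, so \eqref{eq:ep_mass_lim} holds for all $t$; then \eqref{eq:ep_mom_lim} follows from \eqref{eq:ep_mom_ref_lim} (it is the same equation), and \eqref{eq:ep_poi_lim_} is identical to \eqref{eq:ep_poi_ref_lim}.

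The main obstacle, such as it is, is entirely a matter of regularity bookkeeping rather than depth: the manipulations require that $u_{(0)}$ be at least $C^2$ in space and $C^1$ in time so that the mixed second derivatives commute and the divergence may be moved freely through the time derivative and the tensor divergence; this is exactly what the qualifier ``for smooth solutions'' in the statement licenses. One should also note that the cancellation of the $\nabla^2 \colon (u_{(0)} \otimes u_{(0)})$ terms is an algebraic identity valid whether or not $u_{(0)}$ is divergence-free, which is precisely why the reformulated system can afford to impose the constraint only at $t=0$. I would present the two implications in the order above, emphasizing in the converse that the elliptic equation \eqref{eq:ep_mass_ref_lim} has been engineered so that $\Dt(\dvg u_{(0)}) = 0$ identically, turning the divergence-free condition into a propagated quantity.
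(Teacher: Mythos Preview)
Your proposal is correct and follows exactly the approach indicated by the paper. The paper does not supply a formal proof of this proposition (it is summarised from \cite{Deg13}), but the surrounding discussion---taking the divergence of the momentum equation to obtain \eqref{ep:lagrange_mul_phi}, and the subsequent remark that the divergence-free condition at $t=0$ ``plays a vital role in obtaining the incompressibility condition''---points to precisely the two-direction argument you give, including the propagation step $\Dt(\dvg u_{(0)})=0$ for the converse.
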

\begin{remark}
  The divergence-free condition \eqref{eq:ep_div_ref} on the initial
  velocity field in the reformulated Euler-Poisson system plays a
  vital role in obtaining the incompressibility condition for the
  limiting Euler system \eqref{eq:ep_mass_lim}-\eqref{eq:ep_poi_lim_}.
\end{remark}
\begin{remark}
  Even though the reformulated system doesn't serve much of a
  purpose in the analysis of solutions of the Euler-Poisson system or
  the incompressible Euler system, it finds its relevance in the 
  analysis of the numerical schemes to be presented in later
  sections. 
\end{remark}
As is shown in the case of the limiting incompressible system
\eqref{eq:ep_mass_ref_lim}-\eqref{eq:ep_div_ref}, similarly,  the
scaled Euler-Poisson sytem \eqref{eq:ep_nd_mass}-\eqref{eq:ep_nd_poi}
can be also shown to have a reformulated counterpart; see \cite{Deg13}
for a formal derivation of the same.
\begin{proposition}
  The reformulated Euler-Poisson system comprises of the mass
  conservation equation \eqref{eq:ep_nd_mass}, the momentum
  conservation equation \eqref{eq:ep_nd_mom} with a reformulated
  poisson equation and two constraints on the initial conditions given
  as bellow: 
  \begin{align}
    \dvg\left((\veps^2 \D^2_t + \rho) \nabla \phi\right) &=
                                                           \nabla^2\colon\left(\frac{q\otimes
                                                           q}{\rho}+pI\right), \label{eq:ep_poi_ref} 
    \\ 
    \veps^2  \Delta \phi &= \rho - 1, \ \mbox{at} \ t = 0, \label{eq:ep_ic_ref} \\ 
    \veps^2 \Delta \D_t \phi &= \dvg q, \ \mbox{at}  \ t = 0 \label{eq:ep_icd_ref}. 
  \end{align}
  The reformulated system is equivalent to the Euler-Poisson system
  \eqref{eq:ep_nd_mass}-\eqref{eq:ep_nd_poi} as long as their solutions are
  smooth.  
\end{proposition}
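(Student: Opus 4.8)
The plan is to prove the asserted equivalence in both directions, the key point being that the mass equation \eqref{eq:ep_nd_mass} and the momentum equation \eqref{eq:ep_nd_mom} belong to \emph{both} systems, so that only the ``Poisson part'' differs; smoothness is used throughout to commute the spatial operators $\dvg,\grd,\Delta$ freely with $\D_t$. First I would treat the passage from \eqref{eq:ep_nd_mass}--\eqref{eq:ep_nd_poi} to the reformulated system. Differentiating \eqref{eq:ep_nd_poi} once in $t$ and substituting \eqref{eq:ep_nd_mass} gives $\veps^2\Delta\D_t\phi=\D_t\rho=-\dvg q$, whose restriction to $t=0$ is (the sign-corrected form of) the initial constraint \eqref{eq:ep_icd_ref}; likewise \eqref{eq:ep_ic_ref} is just \eqref{eq:ep_nd_poi} evaluated at $t=0$. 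Differentiating \eqref{eq:ep_nd_poi} a second time gives $\veps^2\Delta\D_t^2\phi=\D_t^2\rho=-\dvg\D_t q$; inserting \eqref{eq:ep_nd_mom} for $\D_t q$ and using the contractions $\dvg\dvg A=\nabla^2\colon A$ and $\Delta p=\nabla^2\colon(pI)$ yields
\begin{equation*}
  \veps^2\Delta\D_t^2\phi=\nabla^2\colon\Bigl(\frac{q\otimes q}{\rho}+pI\Bigr)-\dvg(\rho\grd\phi).
\end{equation*}
Writing $\veps^2\Delta\D_t^2\phi=\veps^2\dvg(\D_t^2\grd\phi)$ and moving $\dvg(\rho\grd\phi)$ to the left gives exactly \eqref{eq:ep_poi_ref}, so every smooth solution of \eqref{eq:ep_nd_mass}--\eqref{eq:ep_nd_poi} solves the reformulated system.

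For the converse, suppose $(\rho,q,\phi)$ is a smooth solution of the reformulated system, i.e.\ of \eqref{eq:ep_nd_mass}, \eqref{eq:ep_nd_mom}, \eqref{eq:ep_poi_ref} and the two initial constraints. I would set $\psi:=\veps^2\Delta\phi-(\rho-1)$ and show $\psi\equiv0$. Computing $\D_t^2\rho$ from \eqref{eq:ep_nd_mass}--\eqref{eq:ep_nd_mom} as in the previous paragraph (using only those two equations) shows $\D_t^2\rho+\dvg(\rho\grd\phi)=\nabla^2\colon\bigl(\frac{q\otimes q}{\rho}+pI\bigr)$, while expanding the left side of \eqref{eq:ep_poi_ref} gives $\veps^2\Delta\D_t^2\phi+\dvg(\rho\grd\phi)=\nabla^2\colon\bigl(\frac{q\otimes q}{\rho}+pI\bigr)$; subtracting, the $\dvg(\rho\grd\phi)$ terms cancel, leaving $\veps^2\Delta\D_t^2\phi=\D_t^2\rho$, i.e.\ $\D_t^2\psi=0$. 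The initial constraints supply the Cauchy data: \eqref{eq:ep_ic_ref} gives $\psi(0,\cdot)=0$, and \eqref{eq:ep_icd_ref} together with \eqref{eq:ep_nd_mass} gives $\D_t\psi(0,\cdot)=\veps^2\Delta\D_t\phi(0,\cdot)+\dvg q(0,\cdot)=0$. Integrating $\D_t^2\psi=0$ twice in time with vanishing Cauchy data forces $\psi\equiv0$, which is precisely \eqref{eq:ep_nd_poi}; since \eqref{eq:ep_nd_mass}--\eqref{eq:ep_nd_mom} are common to both systems, the equivalence follows.

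There is no genuine analytic obstacle once smoothness is granted, since \eqref{eq:ep_poi_ref} is obtained from \eqref{eq:ep_nd_poi} by pure differentiation and the converse reduces to uniqueness for $\D_t^2\psi=0$ with zero Cauchy data. The two places that need care are: (i) the systematic use of the Hessian contractions $\dvg\dvg\bigl(\frac{q\otimes q}{\rho}\bigr)=\nabla^2\colon\bigl(\frac{q\otimes q}{\rho}\bigr)$ and $\Delta p=\nabla^2\colon(pI)$, so that the flux terms line up with those in \eqref{eq:ep_poi_ref}; and (ii) keeping the sign in the initial constraint \eqref{eq:ep_icd_ref} consistent with the mass balance \eqref{eq:ep_nd_mass} --- differentiating \eqref{eq:ep_nd_poi} in time forces $\veps^2\Delta\D_t\phi=-\dvg q$ at $t=0$, and it is exactly this relation that makes $\D_t\psi(0,\cdot)$ vanish in the converse direction. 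I expect (i), together with careful bookkeeping of which equations are invoked in which direction, to be the only point requiring attention.
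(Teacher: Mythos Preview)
Your argument is correct and is the standard route: differentiate the Poisson equation twice in time, substitute mass and momentum, and for the converse show that the defect $\psi=\veps^2\Delta\phi-(\rho-1)$ satisfies the trivial second-order ODE in $t$ with zero Cauchy data. The paper does not actually supply a proof of this proposition; it simply refers the reader to \cite{Deg13} for the formal derivation, so there is nothing to compare against beyond noting that your derivation is exactly the kind of computation that reference contains.

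One point worth flagging explicitly, which you already noticed: the initial constraint \eqref{eq:ep_icd_ref} as printed in the paper carries the wrong sign. Differentiating \eqref{eq:ep_nd_poi} in $t$ and using \eqref{eq:ep_nd_mass} gives $\veps^2\Delta\D_t\phi=\D_t\rho=-\dvg q$, not $+\dvg q$. Your proof uses the correct sign, and indeed the converse direction \emph{requires} it: with the sign as printed one would get $\D_t\psi(0,\cdot)=2\,\dvg q(0,\cdot)$ rather than zero, and the argument would collapse. So your parenthetical ``sign-corrected form'' is not a stylistic quibble but an actual correction to the statement, and you are right to insist on it.
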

\begin{remark}
  As in the case of the reformulated incompressible system with
  respect to the divergence condition on the velocity, here also for
  the compressible system the reformulation has to satisfy the Poisson
  equation and its time derivative, only initially. 
\end{remark}

\section{DAE Formulation and RK Schemes}
\label{sec:ch_qn_time_sd}

The challenges faced in the numerical approximation of the EP system
\eqref{eq:ep_nd_mass}-\eqref{eq:ep_nd_poi} are multifold. First, the
non-evolutionary Poisson equation \eqref{eq:ep_nd_poi} has to be
discretised along with the evolutionary hyperbolic conservation equations
\eqref{eq:ep_nd_mass}-\eqref{eq:ep_nd_mom}. Second, in order to
address the stiffness arising in the quasineutral regime, some
implicitness is to be introduced whilst maintaining the maximal
possible explicitness to achieve computational optimality. Third, the
discretisation should allow high order extensions in space and
time. Fourth, and most importantly, the resulting scheme should be AP
in the quasineutral limit.  

In this section, we address the first challenge mentioned above by
drawing an analogy between the EP system
\eqref{eq:ep_nd_mass}-\eqref{eq:ep_nd_poi} and a class of index-1
DAEs. We refer the reader to, e.g.\ \cite{BCP96} for a comprehensive
treatment of the theory and numerics of DAEs. Subsequently, a brief
summary of the so-called indirect approach \cite{HW96} to derive
implicit RK schemes for DAEs is put on display. Drawing inspiration
from this approach, and to tackle the second concern, two different
IMEX-RK techniques for stiff systems of ODEs are succinctly overviewed
in order to design semi-implicit schemes for the EP system, possessing
the required amount of implicitness.  

\subsection{Linear Subsystem and Index-1 DAEs}
\label{subsec:lin_subsys_ind1_DAE}
To elucidate the parallel between the EP system and index-1 DAEs, we
first linearise the EP system \eqref{eq:ep_nd_mass}-\eqref{eq:ep_nd_poi},
following the lines of \cite{DLV08}, about a stationary homogeneous
state $\bar{\rho} =  1, \ \bar{q} = 0, \ \D_x \bar{\phi} = 0$. Let the
perturbations from the considered stationary state be represented by an
infinitesimal $\delta \ll 1$. An asymptotic expansion of the unknowns
around the constant state with respect to $\delta$ is given by;
\begin{equation}
  \rho = 1 + \delta \rho^{\prime} + \mcal{O} (\delta^2), \quad
  q = \delta u^{\prime} + \mcal{O}(\delta^2), \quad
  \phi = \delta \phi^{\prime} + \mcal{O} (\delta^2).
\end{equation}
Considering only the first order terms in $\delta$, and
dropping the primes for the sake of convenience, we obtain the
following linearised EP system: 
\begin{align}
  \D_t \rho + \D_x u &= 0, \label{eq:ep_mass_lin}\\
  \D_t u + c_{s}^2 \D_x \rho &= \D_x \phi, \label{eq:ep_mom_lin}\\
  \veps^2 \D_{xx} \phi &= \rho, \label{eq:ep_poi_lin}
\end{align}
where $c_s = \sqrt{p^{\prime} (1)}$ is the speed of sound. We take the
partial Fourier transform, in space of the equations
\eqref{eq:ep_mass_lin}-\eqref{eq:ep_poi_lin} to obtain the
following system of equations: 
\begin{align}
  \D_t \hat{\rho} + i \xi \hat{u} &= 0, \label{eq:ep_mass_DAE} \\
  \D_t \hat{u} + i \xi c_s^2 \hat{\rho} & = i \xi \hat{\phi},\label{eq:ep_mom_DAE} \\
  -\veps^2 \xi^2 \hat{\phi} &= \hat{\rho}.\label{eq:ep_poi_DAE}
\end{align}
Here, $\hat{\rho}, \hat{u}$ and $\hat{\phi}$ are the Fourier
transformed variables corresponding to $\rho, u$, and $\phi$,
respectively. Furthermore, to comprehend the analogy between the
Fourier transformed linearised EP system
\eqref{eq:ep_mass_DAE}-\eqref{eq:ep_poi_DAE}, the Fourier space
variable $\xi$ is treated as a fixed parameter. 
\begin{proposition}
  The set of equations \eqref{eq:ep_mass_DAE}-\eqref{eq:ep_poi_DAE}
  constitutes an index-1 DAE system for the unknown vector $V =
  (\hat{\rho}, \hat{u}, \hat{\phi})$ with respect to the time
  variable $t$. 
\end{proposition}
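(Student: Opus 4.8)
The plan is to recast the Fourier-transformed system \eqref{eq:ep_mass_DAE}--\eqref{eq:ep_poi_DAE} into the standard semi-explicit form of a differential--algebraic system and then verify the algebraic characterisation of index one. First I would identify the differential and algebraic components: equations \eqref{eq:ep_mass_DAE} and \eqref{eq:ep_mom_DAE} carry the time derivatives of $\hat{\rho}$ and $\hat{u}$, whereas \eqref{eq:ep_poi_DAE} contains no time derivative at all, so $\hat{\phi}$ is the algebraic unknown. Writing $V = (\hat{\rho}, \hat{u}, \hat{\phi})$, the system takes the form $M \D_t V = \mathcal{F}(V)$ with the singular mass matrix $M = \mathrm{diag}(1,1,0)$; equivalently, with $x = (\hat{\rho},\hat{u})$ and $y = \hat{\phi}$, it is the semi-explicit pair $\D_t x = f(x,y)$, $0 = g(x,y)$, where $g(x,y) = -\veps^2\xi^2 \hat{\phi} - \hat{\rho}$.

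Next I would recall the classical criterion (see \cite{BCP96, HW96}) that a semi-explicit DAE has differentiation index one precisely when the Jacobian $\D_y g$ of the constraint with respect to the algebraic variable is nonsingular along the solution. Here $\D_{\hat{\phi}} g = -\veps^2 \xi^2$, which is a nonzero scalar for every fixed $\xi \neq 0$ and $\veps > 0$; hence the hypothesis is met and the index equals one.

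To keep the statement self-contained I would also exhibit the underlying ordinary differential equation directly: differentiating the constraint \eqref{eq:ep_poi_DAE} once in time and substituting \eqref{eq:ep_mass_DAE} gives $-\veps^2\xi^2 \D_t\hat{\phi} = \D_t\hat{\rho} = -i\xi\hat{u}$, i.e.\ $\D_t \hat{\phi} = i\hat{u}/(\veps^2\xi)$. Combined with \eqref{eq:ep_mass_DAE}--\eqref{eq:ep_mom_DAE}, this produces an explicit ODE system for $(\hat{\rho},\hat{u},\hat{\phi})$ after exactly one differentiation of the algebraic equation, which is what index one means. Equivalently one may substitute $\hat{\rho} = -\veps^2\xi^2\hat{\phi}$ to reduce the problem to a $2\times 2$ linear ODE in $(\hat{u},\hat{\phi})$, confirming that the constraint manifold is the whole $(\hat{u},\hat{\phi})$-plane.

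The computation itself is elementary, so there is no serious obstacle; the only point deserving a word of caution is the role of the parameters. The argument genuinely requires $\veps \neq 0$ and $\xi \neq 0$: at $\xi = 0$ the Poisson relation degenerates to $\hat{\rho} = 0$, and at $\veps = 0$ the system ceases to be an index-one DAE altogether --- this very degeneration as $\veps \to 0$ is the singular quasineutral limit that motivates the whole construction. Since $\xi$ is held fixed and nonzero and $\veps > 0$ throughout this section, none of this affects the assertion, and the proof is complete.
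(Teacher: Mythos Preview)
Your proof is correct and follows essentially the same line as the paper: identifying the singular mass matrix $\mathrm{diag}(1,1,0)$ to certify the DAE structure, and then differentiating the algebraic constraint once and substituting \eqref{eq:ep_mass_DAE} to obtain an ODE system, thereby establishing index one. You additionally invoke the standard Jacobian criterion $\D_{\hat{\phi}} g = -\veps^2\xi^2 \neq 0$ and remark on the degeneration at $\veps = 0$ or $\xi = 0$, both of which are welcome clarifications not present in the paper's argument but not a different approach.
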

\begin{proof}
  To begin the proof, we first make the observation that
  \eqref{eq:ep_poi_DAE} is a purely algebraic equation free of any 
  differential terms. In order to substantiate the claim in the proposition,
  i.e.\ the equations \eqref{eq:ep_mass_DAE}-\eqref{eq:ep_poi_DAE}
  form a DAE system, let us consider the functions $\mbb{F}, F_1, F_2$
  and $F_3$, defined by  
  \begin{align}
    \mbb{F}(t, V, V^{\prime}) &:= \begin{pmatrix}
      F_{1} (t, V, V^{\prime})\\
      F_{2} (t, V, V^{\prime})\\
      F_{3} (t, V, V^{\prime})
    \end{pmatrix} := \begin{pmatrix}
      \frac{ \D \hat{\rho} }{\D t}+ i \xi \hat{u} \\
      \frac{ \D \hat{u}}{\D t} + i \xi c_s^2 \hat{\rho} - i \xi \hat{\phi} \\
      -\veps^2 \xi^2 \hat{\phi} - \hat{\rho}
    \end{pmatrix}.
  \end{align}
Here we have denoted $V^{\prime}= \Dt V$. Evaluating the Jacobian
$\frac{\D\mbb{F}}{\D V^{\prime}}$ yields  
\begin{align}
  \frac{\D\mbb{F}}{\D V^{\prime}} &= \begin{pmatrix}
    1 & 0 & 0\\
    0 & 1 & 0\\
    0 & 0 & 0
  \end{pmatrix},
\end{align}
which is a non-invertible/singular matrix. Therefore, using the
definition of DAEs, cf.\ \cite{BCP96}, we conclude that the equations 
\eqref{eq:ep_mass_DAE}-\eqref{eq:ep_poi_DAE} constitute a system of
DAEs.   

Next, we address the claim that the DAE system
\eqref{eq:ep_mass_DAE}-\eqref{eq:ep_poi_DAE} is of index-1. Towards
this interest, we differentiate \eqref{eq:ep_poi_DAE} with respect to
$t$ and substitute for $\D_t \hat{\rho}$ from \eqref{eq:ep_mass_DAE},
to get a differential equation for $\hat{\phi}$. At the end, the DAE
system \eqref{eq:ep_mass_DAE}-\eqref{eq:ep_poi_DAE} gets transformed
into    
\begin{align}
  \D_t \hat{\rho} + i \xi \hat{u} &= 0, \label{eq:ep_mass_DAE_ode} \\
  \D_t \hat{u} + i \xi c_s^2 \hat{\rho} - i \xi \hat{\phi} & =
                                                             0,\label{eq:ep_mom_DAE_ode}
  \\ 
  -\veps^2 \xi^2 \D_t \hat{\phi}  + i \xi
  \hat{u}&=0.\label{eq:ep_poi_DAE_ode} 
\end{align}
It can easily be verified that the set of equations
\eqref{eq:ep_mass_DAE_ode}-\eqref{eq:ep_poi_DAE_ode} is purely
differential, and not DAEs, by taking the Jacobian with respect to
$V^{\prime}$, as done above. Since only one differentiation was required to
transform the DAEs \eqref{eq:ep_mass_DAE}-\eqref{eq:ep_poi_DAE} into
the ODEs \eqref{eq:ep_mass_DAE_ode}-\eqref{eq:ep_poi_DAE_ode}, we
conclude that the index of the DAEs is one.
\end{proof}

Motivated by the conclusion of the above proposition for the
linearised EP system \eqref{eq:ep_mass_lin}-\eqref{eq:ep_poi_lin}, we
rewrite the non-linear one-fluid EP system
\eqref{eq:ep_nd_mass}-\eqref{eq:ep_nd_poi} as a set of DAEs via
the following definition.
\begin{definition}
  The DAE formulation of the scaled EP system
  \eqref{eq:ep_nd_mass}-\eqref{eq:ep_nd_poi} is given by    
  \begin{align}
    \D_tU +  H(U, \phi) &= 0,  \label{eq:ep_DAE_form_ode}\\
    G(U, \phi) &= 0, \label{eq:ep_DAE_form_alg}
  \end{align} 
  where 
  \begin{equation}\label{eq:ep_DAE_form_H}
    U := \begin{pmatrix}
    \rho \\
    q 
  \end{pmatrix}, \
  H(U, \phi) := F(U) - S(U, \phi), \  G(U, \phi) := \veps^2 \Delta \phi - \rho + 1,
\end{equation}
and
\begin{equation} 
  F(U) := \begin{pmatrix}
    \dvg q \\
    \dvg \left (\frac{q \otimes q}{\rho} \right) + \grd p(\rho) 
  \end{pmatrix}, \ 
  S(U, \phi) := \begin{pmatrix}
    0 \\
    \rho \grd \phi
  \end{pmatrix}.
\end{equation}
\end{definition}
We underline the fact that the method of lines approach is used to write
the system of PDEs \eqref{eq:ep_nd_mass}-\eqref{eq:ep_nd_poi} in a
DAE form given by
\eqref{eq:ep_DAE_form_ode}-\eqref{eq:ep_DAE_form_alg}. Analogous 
treatment of PDEs using the method of lines philosophy to obtain a
system of ODEs is a regular practice in the literature; see, e.g.\ 
\cite{AS20, ARW95, PR05} and the references therein for examples.   
\begin{remark}
  Note that unlike the yield typically obtained using the method of
  lines, i.e.\ a system of ODEs, in this paper, the semblance is drawn with
  a system of DAEs. 
\end{remark}

\subsection{Indirect Approach for RK Methods for DAEs of Index 1}

Having recast the EP system as a set of DAEs in
\eqref{eq:ep_DAE_form_ode}-\eqref{eq:ep_DAE_form_alg}, we now turn
towards the numerical approximation. It has to be noted that even
though implicit RK schemes were primarily designed for stiff systems
of ODEs, these methods can be systematically extended to even broader
classes of equations, such as DAEs. What follows is a brief exposure
to the so-called indirect approach \cite{HW96} to design implicit RK
schemes for index-1 DAEs. Dedicated to this aim, we consider an
arbitrary set DAEs of index 1 in the form    
\begin{align}
  y^{\prime} &= f(y, z), \label{eq:ep_DAE_I1_1}\\
  0 &= g(y,z), \label{eq:ep_DAE_I1_2}
\end{align} 
where $f$ and $g$ are sufficiently smooth functions, and $y$ and $z$ are
vectors of appropriate dimensions with respect to the functions $f$
and $g$.

We now consider an $s$-stage implicit RK method defined by the triple
$(A,c,\omega)$, where $A=(a_{k,l})$ are the coefficients, $c=(c_k)$
are the intermediate times and $\omega=(\omega_{k})$ are the
weights. The indirect approach \cite{HW96} defines an RK scheme for 
\eqref{eq:ep_DAE_I1_1}-\eqref{eq:ep_DAE_I1_2} with intermediate stages  
$(Y^k,Z^k)$ in the following way: 
\begin{align}
  Y^k &= y^n + \Dlt \sum_{l = 1}^s a_{k,l} f(Y^l, Z^l),\ k = 1, 2,
        \ldots, s, \label{eq:ep_Dapp_RK_1} \\
  0 &= g(Y^k, Z^k), \ k = 1, 2, \ldots, s\label{eq:ep_Dapp_RK_2}, 
\end{align}
where the last row of the coefficient matrix $A$ is taken to be equal
to the vector $\omega$; in other words, the implicit RK scheme is
stiffly accurate (SA); see \cite{HW96}. Hence, the numerical solution
$(y^{n+1}, z^{n+1})$ is defined by $y^{n+1}=Y^s$ and 
\begin{equation} \label{eq:ch0_eq_cond_dapp_n+1}
  g(y^{n+1}, z^{n+1}) = g(Y^s, Z^s) = 0.
\end{equation}
Note that the indirect approach enables us to ensure that the manifold  
\begin{equation} \label{eq:ch0_eq_cond_dapp}
  0 = g(y, z)
\end{equation}
is preserved by all the intermediate stages and automatically by the
update stage. For a detailed discussion on the derivation, validity
and stability analysis of the direct approach of implicit RK schemes
applied to index-1 DAEs, interested readers may refer to \cite{HW96}.
\begin{remark}
  The manifold preserving property \eqref{eq:ch0_eq_cond_dapp_n+1} of
  the indirect approach plays a crucial role in obtaining a consistent
  discretisation of non-evolutionary equations, such as the Poisson
  equation in the EP system \eqref{eq:ep_nd_mass}-\eqref{eq:ep_nd_poi}. 
\end{remark}

The indirect approach presented in \cite{HW96}, and briefed above, in 
its core, takes implicit RK schemes for ODEs and derives implicit
schemes for DAEs. Yes indeed an implicit scheme does fair well
mathematically in the sense that it yields the asymptotic properties,
such as stability and consistency which leads to the AP  property. But,
as a drawback, because of the fully implicit nature of the method,
it becomes computationally very expensive when tackling problems,
especially with nonlinearities, such as the EP system
\eqref{eq:ep_DAE_form_ode}-\eqref{eq:ep_DAE_form_alg}. In order 
to reduce the overdose of implicitness offered by fully implicit RK 
schemes via the indirect approach platform
\eqref{eq:ep_Dapp_RK_1}-\eqref{eq:ch0_eq_cond_dapp_n+1}, in the
following, we consider two semi-implicit RK approaches. The ultimate
aim is to develop RK time steppings for index-1 DAEs and in turn for
the EP system \eqref{eq:ep_DAE_form_ode}-\eqref{eq:ep_DAE_form_alg} as
well, with an optimal portion of implicitness.

\subsection{IMEX-RK Schemes}
Semi-implicit or IMEX schemes trade unconditional stability
of fully implicit schemes in order to draw some gains in the
computational efficiency front. IMEX-RK schemes are a staple for the CFD
community; see, e.g.\ \cite{AS20, ARW95, BMY17, Bos07, BFR16, HMR09,
  NBA+14, PR05, Zho96} for IMEX schemes applied to tackle problems
plagued with stiffness. Introduction of an adequate dosage of
implicitness through IMEX schemes provides an additional amount of
stability which is usually required for methods resolving singular
parameters, while maintaining a pardonable level of computational
performance. In this work we consider two types of IMEX-RK schemes,
namely 
\begin{itemize}
\item additively partitioned IMEX-RK schemes \cite{ARS97, PR01};
\item semi-implicit IMEX-RK schemes \cite{BFR16}.
\end{itemize}
The former one, the class of additively partitioned IMEX-RK schemes,
is primarily designed for stiff ODEs in which the identifiable stiff
and non-stiff parts are summed together. To this end, consider the
following system of additively partitioned stiff ODEs: 
\begin{equation}\label{eq:ch0_add_stiff_ode}
  y^\prime = f(t, y) + \frac{1}{\veps} g(t, y),
\end{equation}
where $y = y(t) \in \mbb{R}^n, \ f, g \colon \mbb{R} \times \mbb{R}^n
\to \mbb{R}^n$, and $\veps>0$ is a small parameter known as the
stiffness parameter. Here, $f$ and $g$ are referred to as,
respectively, the non-stiff and stiff parts.
\begin{definition}\label{defn:ch0_IMEX-RK}
  Consider the stiff system \eqref{eq:ch0_add_stiff_ode} in additive
  form. Let $y^n$ be the numerical solution of
  \eqref{eq:ch0_add_stiff_ode} at time $t^n$, and let $\Dlt$ denote a
  fixed time step. An $s$-stage IMEX-RK scheme updates $y^n$ to
  $y^{n+1}$ through the following $s$ intermediate stages trailed by
  the update stage: 
\begin{align}
  Y^k &= y^n + \Dlt \sum_{\ell = 1}^{k-1} \tilde{a}_{k,\ell} f(t^n +
  \tilde{c}_\ell \Dlt, Y^\ell) +  \Dlt \frac{1}{\veps} \sum_{l = 1}^{s}
  a_{k,l} g(t^n +  c_l \Dlt, Y^l), \ k=1,2,\dots,s,\label{eq:ch0_IMEX-RK_def_Yi}\\
  y^{n+1} &= y^n + \Dlt \sum_{k = 1}^{s} \tilde{\omega}_{k} f(t^n +
  \tilde{c}_k \Dlt, Y^k) +  \Dlt \frac{1}{\veps} \sum_{k = 1}^{s}
            \omega_{k} g(t^n +  c_k \Dlt,
            Y^k). \label{eq:ch0_IMEX-RK_def_Ynp} 
\end{align}
The IMEX-RK scheme
\eqref{eq:ch0_IMEX-RK_def_Yi}-\eqref{eq:ch0_IMEX-RK_def_Ynp} is
characterised by the intermediate step sizes 
$\tilde{c} = (\tilde{c}_1, \tilde{c}_2,\ldots,\tilde{c}_s)$ and $c =
(c_1, c_2,\ldots, c_s)$, the RK coefficients  $\tilde{A} =
(\tilde{a}_{i,j})$ and $A = (a_{i,j})$ and the weights
$\tilde{\omega}=(\tilde{\omega}_{1},
\tilde{\omega}_2,\dots,\tilde{\omega}_{s})$ and $\omega =
(\omega_1, \omega_2,\dots, \omega_s)$. The matrices $\tilde{A}$ and
$A$ are $s \times s$ matrices, where $\tilde{a}_{i,j} = 0$ for $j \geq
i$ for the scheme to be explicit in $f$ and we further impose $a_{i,j}
= 0$ for $j>i$, to get a diagonally Implicit RK (DIRK) scheme. The
IMEX-RK schemes are usually represented in a compact form using a
double Butcher tableaux  
\begin{figure}[htbp]
  \centering
  \small
  \begin{tabular}{c|c}
    $\tilde{c}^T$	&$\tilde{A}$\\
    \hline 
			&$\tilde{\omega}^T$
  \end{tabular}
  \quad
  \begin{tabular}{c|c}
    $c^T$	&$A$\\
    \hline 
                &$\omega^T$
  \end{tabular}
  \caption{Double Butcher tableaux of an IMEX-RK scheme.}
  \label{fig:ch0_db-butcher-tableau-IMEXRK}
\end{figure}
\end{definition}
There exist a large section of stiff problems amongst ODEs wherein
identifying an additive partition of the right hand side into stiff
and non-stiff subparts may not be possible. There are even 
systems where it is possible to find an additive splitting of the
stiff terms, however, stiffness can be associated with a nonlinear
term. Consequently, in this scenario the implementation of additive
IMEX-RK schemes may lead to a need for solving highly nonlinear
equations, which in-turn requires the usage of expensive, iterative
Newton solvers.   

In the light of the aforestated predicaments the second type of
schemes that we consider here, i.e.\ the semi-implicit IMEX-RK schemes,
have been recently presented in \cite{BFR16}. The authors have devised
an efficient platform to derive IMEX-RK schemes for problems where a
multiplicative stiff-nonstiff splitting can be recognised. Further,
the stiffness can be associated to some part of the unknown vector,
and rest of it might be non-stiff. In such scenarios, these schemes
have an advantage that either a linear solver can be employed or a
Newton iteration for a problem with reduced non-linearity can be
engaged; see \cite{BQR+19, BRS18, FR17} for some related works. 

Consider a general stiff-ODE in the form
\begin{equation}
  y^\prime = H(y,y), \label{eq:ch0_Hyy}
\end{equation}
where in $H$ the first argument represents the non-stiff part of $y$
and the second argument the stiff-part. Note that the right hand side is
assumed to have a stiff dependence only through the last argument. 
In order to distinguish these,
we use subscripts and write 
\begin{equation}
  H(y,y) = H(y_E, y_I). \label{eq:ch0_HyEyI}
\end{equation}
It has to be noted that both $y_E$ and $y_I$ represent the same
variable $y$ itself. The subscripts are used to only distinguish
between the stiff and non-stiff components.The class
of problems characterised by the system
\eqref{eq:ch0_Hyy}-\eqref{eq:ch0_HyEyI} is said 
to be in a generalised partitioned form. The additively partitioned
stiff-problems considered above also fall under this general class. In
what follows, we iterate a semi-implicit time discretisation technique
to numerically approximate the solution of
\eqref{eq:ch0_Hyy}-\eqref{eq:ch0_HyEyI} as presented in
\cite{BFR16}. The discretisation is done in such a way 
that the variable $y$ without a stiff dependence is treated
explicitly, and the other part implicitly. The formal definition of
the scheme is given below, where we consider only a DIRK variant; the
interested reader is referred to \cite{BFR16} for other variants and
a detailed exposure to the scheme.   
\begin{definition} \label{defn:ch0_SI-IMEX}
  Suppose that the RK coefficients $\tilde{A}=(\tilde{a}_{k,\ell}), \
A=(a_{k,l})$ and the weights $\omega=(\omega_k)$ are given. A
  semi-implicit IMEX-RK (SI-IMEX-RK) scheme for \eqref{eq:ch0_Hyy}
  updates the numerical solution $y^n$ at time $t^n$ to the solution
  $y^{n+1}$ at time $t^{n+1} = t^n + \Dlt$ through the following $s$
  intermediate steps:    
  \begin{align}
    Y^k_E &= y^n + \Dlt\sum_{\ell = 1}^{k-1}
            \tilde{a}_{k,\ell}H(Y^{\ell}_{E},Y^{\ell}_{I}) , \
            \mbox{for} \ k=1,2,\dots,s, \label{eq:ch0_SI-IM-UE}  \\  
    Y^k_I &= y^n + \Dlt\sum_{l = 1}^{k} a_{k,l} H(Y^{l}_{E},
            Y^{l}_{I}) , \ \mbox{for} \
            k=1,2,\dots,s, \label{eq:ch0_SI-IM-UI} 
  \end{align}
  and the final update step:  
  \begin{equation}
    y^{n+1} = y^n + \Dlt \sum_{k=1}^{s}\omega_kH(Y^{k}_{E},
    Y^{k}_{I}). \label{eq:ch0_SI-IM-UNp} 
  \end{equation}
\end{definition}
Here, the vectors $Y_I$ and $Y_E$ correspond to the implicit and
explicit parts of the intermediate solutions, and the corresponding
update formulae are implicit and explicit, respectively. 
\begin{remark} \label{re:remark_SI_SA}
  In order to facilitate the design of IMEX schemes for the EP system
  \eqref{eq:ep_nd_mass}-\eqref{eq:ep_nd_poi}, based on the
  indirect approach discussed above, henceforth, we consider only
  SI-IMEX-RK schemes with stiffly accurate implicit solvers. 
\end{remark}
As a consequence of the above remark, the numerical solution of the
SI-IMEX-RK schemes under consideration in this paper boils down to the
solution of the last implicit intermediate stage $s$, i.e.,
\begin{equation}\label{eq:ep_SI_stiff_acc_fin}
  y^{n+1} = Y^s_I
\end{equation}
The order conditions of SI-IMEX schemes and additively partitioned
IMEX-RK schemes are more or less the same \cite{BFR16}. Moreover the
SI-IMEX schemes are also represented using a double Butcher
tableaux, cf.\ Figure~\ref{fig:ch0_db-butcher-tableau-IMEXRK}. 

\section{Time Semi-discrete Schemes for the EP System}
\label{sec:ch_qn_time_semi-disc}

The goal of this section is to address the third and fourth
challenges, outlined in Section~\ref{sec:ch_qn_time_sd}, faced by
numerical approximations of the EP system in the quasineutral 
limit: achieving both high orders of accuracy and the AP property. To this
end, we develop two different classes of semi-implicit schemes for the
EP system \eqref{eq:ep_nd_mass}-\eqref{eq:ep_nd_poi}. Both the schemes
are based on the indirect approach for DAEs as discussed in the preceding
section. The main difference between the two schemes is that one is
designed on the additively partitioned IMEX-RK platform and the other
on the SI-IMEX-RK platform. The first scheme, despite being linearly
implicit and hence computationally efficient, when subjected to an
asymptotic analysis reveals that it is not consistent with the
quasineutral limit. As a remedy to the paltry performance of the
additively partitioned IMEX-RK schemes in the asymptotic limit, we
develop the second class of schemes based on the SI-IMEX framework. A
formal consistency analysis is then presented in
Section~\ref{sec:anal_TSD} to establish its AP property. 

\subsection{Additively Partitioned IMEX-RK-DAE Schemes}
\label{subsec:add_part_IMEX-RK-DAE}
In order to design RK schemes for the EP system
\eqref{eq:ep_nd_mass}-\eqref{eq:ep_nd_poi}, henceforth we consider
only its DAE formulation
\eqref{eq:ep_DAE_form_ode}-\eqref{eq:ep_DAE_form_alg}. As a first
step, we identify an additive stiff-non-stiff splitting of the
function $H$ defined in \eqref{eq:ep_DAE_form_ode}. Note that the
equation \eqref{eq:ep_DAE_form_H} already gives a natural splitting of 
$H$ as the sum of the nonstiff hyperbolic flux $F$ and the stiff
source term $S$, cf.\ the asymptotic analysis presented in
Section~\ref{ch:qn_qn_lt}. This splitting corresponds to the so-called
classical splitting reported in the literature; see \cite{Deg13,Fab92}
for further details. Making use of the indirect approach from
Section~\ref{sec:ch_qn_time_sd} with the above choice of $F$ and $S$,
we define an additively partitioned IMEX-RK-DAE scheme as follows.  
\begin{definition}\label{defn:IMEX-RK-DAE}
  An $s$-stage additively partitioned IMEX-RK-DAE scheme for the
  EP system \eqref{eq:ep_DAE_form_ode}-\eqref{eq:ep_DAE_form_alg}
  to update the solution $(U^n, \phi^n)$ at time $t^n$ to $(U^{n+1},
  \phi^{n+1})$ at time $t^{n+1}$ uses the following $s$ intermediate
  stages:
  \begin{align} 
    U^k &= U^n - \Dlt \sum_{\ell = 1}^{k-1} \tilde{a}_{k, \ell}
          F(U^{\ell}) + \Dlt \sum_{\ell = 1}^{k} a_{k, \ell} 
          S(U^{\ell}, \phi^{\ell}), \ k=1,2,\dots,s, \label{eq:ep_ad_imex_ieu}\\
    G \big(U^k, \phi^k\big) &=0, \ 
                                 k=1,2,\dots,s, \label{eq:ep_ad_imex_poi} 
  \end{align}
  and the final update,
  \begin{equation}
    \label{eq:ep_IMEXDAE_un+}
    (U^{n+1}, \phi^{n+1}) = (U^s, \phi^s).
  \end{equation}
\end{definition}
\begin{remark}
  Note that the above IMEX-RK schemes are stiffly accurate, and only
  DIRK variants are considered for the sake of maintaining simplicity
  and optimal computational performance. 
\end{remark}
The numerical solution is computed in the following chronology. For
each $k = 1, 2, \dots, s$, first, the density $\rho^k$ is explicitly
updated using the mass conservation update in the system
\eqref{eq:ep_ad_imex_ieu}. Using the updated value of $\rho^k$ thus
obtained, the linear elliptic problem \eqref{eq:ep_ad_imex_poi} is
solved for $\phi^k$. The momentum $q^k$ can then be obtained using the
computed values of $\rho^k$ and $\phi^k$ from
\eqref{eq:ep_ad_imex_ieu}, which is now an explicit update.  

Assuming that the data  $\rho^n, q^n$, and $\phi^n$ at time
$t^n$ for the above scheme
\eqref{eq:ep_ad_imex_ieu}-\eqref{eq:ep_IMEXDAE_un+} are well-prepared
in the sense of Definition~\ref{defn:ep_wellprep}, for each $k = 1, 2,
\ldots, s$, taking the limit $\veps \to 0$ in the Poisson equation
\eqref{eq:ep_ad_imex_poi} we obtain
\begin{equation}
  \rho^{k}_{(0)} = 1, \  \mbox{for} \ k = 1, 2, \ldots, s.  
\end{equation}
Using the above equation in the mass updates for each $k = 1, 2,
\ldots, s$ we obtain 
\begin{equation}
\dvg q_{(0)}^{k-1} = 0, \  \mbox{for} \ k = 1, 2, \ldots, s. 
\end{equation}
Therefore, in the limit $\veps \to 0$, the additively partitioned
IMEX-RK-DAE scheme \eqref{eq:ep_ad_imex_ieu}-\eqref{eq:ep_IMEXDAE_un+}
boils down to the following limiting scheme:
\begin{align}
  \rho_{(0)}^{n+1} &= 1, \label{eq:ep_IMEX-DAE_lim_rho} \\
  u^{n+1}_{(0)} & = u_{(0)}^n - \Dlt \sum_{\ell = 1}^{k-1}
                  \tilde{a}_{k, \ell} \dvg \Big( u_{(0)}^{\ell}
                  \otimes u_{(0)}^{\ell}  \Big)  + \Dlt
                  \sum_{\ell=1}^k a_{k, \ell} \nabla 
            \phi^{\ell}_{(0)}, \label{eq:ep_IMEX-DAE_lim_mom}\\
  \dvg u^k_{(0)} &= 0, \  \mbox{for} \ k = 1, 2, \ldots, s
                   -1. \label{eq:ep_IMEX-DAE_lim_phi}
\end{align}
Note that the above limiting scheme doesn't provide a way for
consistently caring forward the divergence-free condition to the
numerical solution at time $t^{n+1}$. Precisely
speaking, the IMEX-RK-DAE schemes
\eqref{eq:ep_ad_imex_ieu}-\eqref{eq:ep_IMEXDAE_un+} only offer the
divergence-free condition up-till the $(s-1)^{th}$ stage via
\eqref{eq:ep_IMEX-DAE_lim_phi}, and fail to provide it for
$u^s _{(0)}  = u^{n+1}_{(0)}$. A closer look at the limiting scheme
\eqref{eq:ep_IMEX-DAE_lim_rho}-\eqref{eq:ep_IMEX-DAE_lim_phi}
also reveals that, it lacks a way to obtain all the unknowns. Being
fettered by the divergence-free condition on the leading order
velocity, the  IMEX-RK-DAE schemes fail to be consistent with the
incompressible limit system
\eqref{eq:ep_mass_lim}-\eqref{eq:ep_poi_lim_} when $\veps 
\to 0$ and also donot provide valid recursion to update all the
variable at time $t^{n+1}$. Therefore, the IMEX-RK-DAE scheme cannot
be AP.

A similar analysis of a first order version of the additively
partitioned IMEX-RK-DAE scheme
\eqref{eq:ep_ad_imex_ieu}-\eqref{eq:ep_IMEXDAE_un+} can be found in
\cite{Deg13}. In \cite{Deg13}, the first order scheme is termed as the
classical scheme which can be obtained in the present setting by using
the IMEX-RK coefficients given in
Figure~\ref{fig:ep_DIRK111}. Consequently, the IMEX-RK-DAE schemes can
be conceived as natural higher order extensions of the classical scheme.   
\begin{figure}[htbp]
  \small
  \centering
  \begin{tabular}{c|c c}
    0   & 0 &  0 \\
    0   & 1 &  0 \\
    \hline 
        & 1 & 0	
  \end{tabular}
  \hspace{10pt}
  \begin{tabular}{c|c c }
    0  & 0 & 0 \\
    0  & 0 & 1 \\
    \hline 
       & 0 & 1	
  \end{tabular}
  \caption{Double Butcher tableaux of DIRK(1,1,1) scheme.}
  \label{fig:ep_DIRK111}
\end{figure}
In \cite{DLV08}, based on knowledge from plasma physics, it has been
reported that the stability constraints for the classical scheme are
extremely prohibitive in the quasineutral regime $\veps \ll
1$. Therefore, combining this observation and the above presented
proof of inconsistency, one can conclude that the IMEX-RK-DAE schemes 
derived from additively partitioned IMEX-RK schemes fail to qualify to
being asymptotic preserving. 
\begin{remark}
  Note that it is possible to introduce other flux-splittings in the
  additive framework and develop IMEX-RK-DAE schemes possessing the AP
  property: e.g.\ the mass flux $q$ can be taken in $S$ instead of
  $F$. But, as a consequence of such a choice a non-linear elliptic
  problem has to be computationally tackled using iterative Newton
  solvers. In this paper the endeavour is to stay consistent with the
  continuous system which only has a linear elliptic problem. 
\end{remark}

\subsection{SI-IMEX-RK-DAE Schemes}
\label{subsec:SI-IMEX-DAE-TD}
As a first step towards designing an SI-IMEX-RK scheme based on the
indirect approach for the EP system
\eqref{eq:ep_DAE_form_ode}-\eqref{eq:ep_DAE_form_alg}, we specify the
stiff and non-stiff terms. Consequently, it leads to the identification
of terms that are to be treated implicitly or explicitly in $H$ and
$G$. In the following, tracing the lines of  SI-IMEX schemes
presentation in Section~\ref{sec:ch_qn_time_sd}, we subscript
variables with $I$ and $E$, for those which are to be treated 
implicitly and explicitly, respectively. Based on the asymptotic
analysis presented in Section~\ref{ch:qn_sec2}, we recast the DAE form
\eqref{eq:ep_DAE_form_ode}-\eqref{eq:ep_DAE_form_alg} of the EP system
via the stiff and non-stiff conservative variables $U_I := (\rho_I, 
q_I)^T$  and $U_E := (\rho_E, q_E)^T$, and the stiff potential
$\phi_I$ into,
\begin{align}
  \D_t U + \mbb{H} (U_E, U_I, \phi_I) &= 0, \label{eq:ep_DAE_form_ode_SI} \\
  \mbb{G} (U_I, \phi_I) &= 0, \label{eq:ep_DAE_form_alg_SI}
\end{align}
where 
\begin{equation}
  \begin{aligned}
    \mbb{H}(U_E, U_I, \phi_I) &:= \begin{pmatrix}
      \dvg q_I\\
      \dvg \left( \frac{q_E \otimes q_E}{\rho_E} \right) + \grd
      p(\rho_E)  - \rho_E \phi_I   
    \end{pmatrix}, \\
    \mbb{G}(U_I, \phi_I) &:= -\veps^2 \Delta \phi_I - \rho_I + 1.
  \end{aligned}
\end{equation}
Using the indirect approach, we define the SI-IMEX-RK-DAE scheme for
the above DAE formulation of the EP system as follows. 
\begin{definition}\label{defn:SI-IMEX-RK-DAE}
  An $s$-stage SI-IMEX-RK-DAE scheme for the Euler-Poisson system
  \eqref{eq:ep_DAE_form_ode}-\eqref{eq:ep_DAE_form_alg} to update the 
  solution $(U^n, \phi^n)$ at time $t^n$ to $(U^{n+1}, \phi^{n+1})$ at 
  time $t^{n+1}$ uses the following $s$ intermediate steps:
  \begin{align} 
    U_E^k &= U^n - \Dlt \sum_{\ell = 1}^{k-1} \tilde{a}_{k, \ell}
            \mbb{H} \left(U_E^{\ell}, U_I^{\ell},
            \phi_I^{\ell}\right), \ k=1,2,\dots,s, \label{eq:ep_ue}\\ 
    U_I^k &= U^{n} - \Dlt \sum_{\ell = 1}^k a_{k,\ell} \mbb{H}\left(U_E^{\ell}, U_I^{\ell},
            \phi_I^{\ell}\right),  \ k=1,2,\dots,s, \label{eq:ep_SIIMEXDAE_k_ode}\\ 
    \mbb{G}\left(U^k_I, \phi_I^k\right) &=0, \ 
                                          k=1,2,\dots,s, \label{eq:ep_SIIMEXDAE_k_alg} 
  \end{align}
  and the update stage
  \begin{equation}
    \label{eq:ep_SIIMEXDAE_un+}
    U^{n+1} = U_I^s, \quad \phi^{n+1}=\phi_I^s.
  \end{equation}
\end{definition}
Even though the above scheme seems to have more implicit operations to 
be performed than the one based on additively partitioned approach, we
will show that it can be equivalently reformulated so that it has the
need to carry out same number of implicit operations as the scheme
given in Definition~\ref{defn:IMEX-RK-DAE}. 
\begin{remark}
  Note that owing to Remark~\ref{re:remark_SI_SA}, the final update
  \eqref{eq:ep_SIIMEXDAE_un+} of the SI-IMEX-RK-DAE scheme is same as
  the solution of the last implicit step.
\end{remark}
\begin{definition}
  \label{defn:ep_si-imex_ref}
  The reformulation of the $s$-stage SI-IMEX-RK-DAE scheme
  \eqref{eq:ep_ue}-\eqref{eq:ep_SIIMEXDAE_un+} to update the solution
  $(\rho^n, q^n,\phi^n)$ at time $t^n$ to $(\rho^{n+1}, q^{n+1},
  \phi^{n+1})$ at time $t^{n+1}$ uses $s$ intermediate stages and an
  update stage as given below. For $k=1,2,\dots,s$, the fully explicit
  update of the $k^{th}$ stage reads  
  \begin{align}
    \rho_E^k &= \rho^n -\Dlt \sum_{\ell = 1}^{k-1} \tilde{a}_{k, \ell}
               \dvg q_I^{\ell}, \label{eq:ep_ref_rhoe_TSD}\\ 
    q_E^k &= q^n -\Dlt \sum_{\ell = 1}^{k-1} \tilde{a}_{k, \ell}
            \dvg\left( \frac{q_E^{\ell} \otimes
            q_E^{\ell}}{\rho_E^{\ell}} \right)-\Dlt \sum_{\ell = 1}^{k-1}
            \tilde{a}_{k,\ell} \grd p(\rho_E^{\ell}) + \Dlt \sum_{\ell
            = 1}^{k-1} \tilde{a}_{k,\ell} \rho_E^{\ell} \grd
            \phi_I^{\ell}. \label{eq:ep_ref_qe_TSD}  
  \end{align}
  The explicit part of the $k^{th}$ implicit stage is computed as
  \begin{align}
    \hat{\rho}_I^k &= \rho^n -\Dlt \sum_{\ell = 1}^{k-1} a_{k, \ell} \dvg
                     q_I^{\ell}, \label{eq:ep_SIDAE_ref_rho_hat_k}\\ 
    \hat{q}_I^k &= q^n -\Dlt \sum_{\ell = 1}^{k} a_{k,
                  \ell}\nabla\cdot\left(\frac{q_E^{\ell}\otimes
                  q_E^{\ell}}{\rho_E^{\ell}} \right)-\Dlt \sum_{\ell = 1}^{k} a_{k,\ell} 
                  \grd p(\rho_E^{\ell}) + \Dlt \sum_{\ell = 1}^{k-1} a_{k,\ell} \rho_E^{\ell}
                  \grd \phi_I^{\ell}.  \label{eq:ep_SIDAE_ref_q_hat_k}          
  \end{align}
  The implicit solution is obtained by first solving the linear elliptic
  problem:
  \begin{equation}
    \dvg\left(\left(\veps^2 + \Dlt^2a_{k,k}^2\rho_E^k\right) \grd
      \phi^k_I\right) = \hat{\rho}_I^k
    -
    \Dlt
    a_{k,k}\dvg
    \hat{q}_I^k-1, \label{eq:ep_SIDAE_ref_phi_k}
  \end{equation}
  and then performing the explicit updates:
  \begin{align}
    q_I^k & = \hat{q}_I^k + \Dlt a_{k,k} \rho_E^k \grd
            \phi^k_I,  \label{eq:ep_SIDAE__ref_q_k} \\   
    \rho^k_I  &= \hat{\rho}_I^k - \Dlt a_{k,k} \dvg
                q_I^k, \label{eq:ep_SIDAE__ref_rho_k}
  \end{align}
  chronologically. Finally, the numerical solution is given by
  $(\rho^{n+1}, q^{n+1},  \phi^{n+1})=(\rho^{s}_I, q^{s}_I, \phi^{s}_I)$.   
\end{definition}
\begin{proposition}\label{prop:ep_ref_equiv}
  The SI-IMEX-RK-DAE scheme  \eqref {eq:ep_ue}-\eqref{eq:ep_SIIMEXDAE_un+} and the 
  reformulated time semi-discrete scheme defined in
  Definition~\ref{defn:ep_si-imex_ref}, are equivalent.
\end{proposition}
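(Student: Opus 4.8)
The plan is to prove the equivalence by direct algebraic manipulation of the compact SI-IMEX-RK-DAE scheme \eqref{eq:ep_ue}--\eqref{eq:ep_SIIMEXDAE_un+}, unpacking the vector-valued relations component-by-component using the definitions of $\mbb{H}$ and $\mbb{G}$, and then isolating the genuinely implicit contributions. First I would substitute the explicit forms of $\mbb{H}$ and $\mbb{G}$ into \eqref{eq:ep_ue}, \eqref{eq:ep_SIIMEXDAE_k_ode}, \eqref{eq:ep_SIIMEXDAE_k_alg}. The density component of \eqref{eq:ep_ue} reads $\rho_E^k = \rho^n - \Dlt\sum_{\ell=1}^{k-1}\tilde a_{k,\ell}\dvg q_I^\ell$, which is exactly \eqref{eq:ep_ref_rhoe_TSD}; the momentum component gives \eqref{eq:ep_ref_qe_TSD} after noting that $\tilde a_{k,k}=0$ so the sum over $\ell$ runs only to $k-1$ and all terms are known from previous stages. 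Similarly, the $\rho$-component of \eqref{eq:ep_SIIMEXDAE_k_ode} is $\rho_I^k = \rho^n - \Dlt\sum_{\ell=1}^{k}a_{k,\ell}\dvg q_I^\ell$; splitting off the $\ell=k$ term yields $\rho_I^k = \hat\rho_I^k - \Dlt a_{k,k}\dvg q_I^k$ with $\hat\rho_I^k$ defined by \eqref{eq:ep_SIDAE_ref_rho_hat_k}, which is \eqref{eq:ep_SIDAE__ref_rho_k}.

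Next I would treat the momentum component of \eqref{eq:ep_SIIMEXDAE_k_ode}, which is $q_I^k = q^n - \Dlt\sum_{\ell=1}^{k}a_{k,\ell}\bigl(\dvg(q_E^\ell\otimes q_E^\ell/\rho_E^\ell) + \grd p(\rho_E^\ell) - \rho_E^\ell\grd\phi_I^\ell\bigr)$. The key observation is that the nonlinear flux and pressure terms depend only on the explicit stage values $\rho_E^\ell, q_E^\ell$, so the entire sum $\sum_{\ell=1}^{k}$ over those pieces is computable once $U_E^1,\dots,U_E^k$ are known; only the source term $\rho_E^\ell\grd\phi_I^\ell$ couples to the implicit potential, and for $\ell < k$ that quantity is already determined. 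Separating the $\ell=k$ source contribution $\Dlt a_{k,k}\rho_E^k\grd\phi_I^k$ from the rest defines $\hat q_I^k$ exactly as in \eqref{eq:ep_SIDAE_ref_q_hat_k} and yields the explicit update \eqref{eq:ep_SIDAE__ref_q_k}. The main computational step is then to substitute this expression for $q_I^k$, together with \eqref{eq:ep_SIDAE__ref_rho_k}, into the algebraic constraint \eqref{eq:ep_SIIMEXDAE_k_alg}, i.e.\ $-\veps^2\Delta\phi_I^k - \rho_I^k + 1 = 0$. Using $\rho_I^k = \hat\rho_I^k - \Dlt a_{k,k}\dvg q_I^k$ and $q_I^k = \hat q_I^k + \Dlt a_{k,k}\rho_E^k\grd\phi_I^k$, one gets $-\veps^2\Delta\phi_I^k - \hat\rho_I^k + \Dlt a_{k,k}\dvg\hat q_I^k + \Dlt^2 a_{k,k}^2\dvg(\rho_E^k\grd\phi_I^k) + 1 = 0$, which rearranges to precisely \eqref{eq:ep_SIDAE_ref_phi_k}. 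Finally the update stage $U^{n+1}=U_I^s$, $\phi^{n+1}=\phi_I^s$ matches the stated output of Definition~\ref{defn:ep_si-imex_ref}; conversely, reversing each substitution shows the reformulated scheme reproduces \eqref{eq:ep_ue}--\eqref{eq:ep_SIIMEXDAE_un+}, so the two are equivalent stage-by-stage by induction on $k$.

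The argument is essentially bookkeeping, but the step requiring the most care is the reorganisation of the momentum equation: one must keep track of which index ranges ($\sum_{\ell=1}^k$ versus $\sum_{\ell=1}^{k-1}$) appear for the flux/pressure terms versus the source term, and verify that the $\ell=k$ source term is the \emph{only} place where $\phi_I^k$ enters, which is what makes the elliptic solve \eqref{eq:ep_SIDAE_ref_phi_k} a single scalar problem rather than a coupled system. I would also note explicitly that the scheme's stiff accuracy (Remark~\ref{re:remark_SI_SA}) is what allows the final update to be read off as $U_I^s$ rather than requiring a separate weighted combination. A clean way to organise the write-up is to proceed by induction on the stage index $k$: assuming $U_E^\ell, U_I^\ell, \phi_I^\ell$ for $\ell<k$ agree in both formulations, derive the $k$-th stage quantities of the reformulation from the compact scheme's $k$-th stage equations, thereby closing the induction and establishing the equivalence for all $s$ stages and hence for the final update.
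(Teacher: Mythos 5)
Your proposal is correct and follows essentially the same route as the paper's proof: unpack the compact scheme component-wise, split off the diagonal $\ell=k$ contributions to define $\hat\rho_I^k,\hat q_I^k$, eliminate $\rho_I^k$ and $q_I^k$ in the algebraic constraint to obtain the scalar elliptic problem \eqref{eq:ep_SIDAE_ref_phi_k}, then recover $q_I^k,\rho_I^k$ by explicit evaluations, arguing stage-by-stage (the paper phrases this as induction on $k$, treating $k=1$ separately before the general stage). One bookkeeping remark: for the rearrangement to yield \eqref{eq:ep_SIDAE_ref_phi_k} with the sign $+\Dlt^2a_{k,k}^2\rho_E^k$, the constraint must be read as $\veps^2\Delta\phi_I^k=\rho_I^k-1$ (consistent with \eqref{eq:ep_nd_poi} and the paper's own intermediate equations); the minus sign in the displayed definition of $\mbb{G}$ is a typo, and carrying it literally would flip the sign of the $\Dlt^2a_{k,k}^2$ term.
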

\begin{proof}
  We prove the proposition by using induction on the number of stages
  of the SI-IMEX-RK-DAE scheme. To start with, we consider the first
  stage, i.e.\ when $k = 1$, which corresponds to a trivial update for
  $\rho^1_E$ and $q_E^1$ given by    
  \begin{equation}
    \rho_E^1 = \rho^n, \quad q^1_E = q^n,  \label{eq:ep_tsd_k1_rho_q}
  \end{equation}
  and $\rho^1_I ,q_I^1$ and $\phi_I^1$ are obtained by solving the first
  implicit intermediate step:
  \begin{align}
    \rho^1_I & = \rho^n - \Dlt a_{1,1} \dvg q_I^1, \label{eq:ep_SIDAE_rho_1}\\
    q_I^1 & = q^n - \Dlt a_{1,1} \dvg \left(\frac{q_E^1 \otimes
            q_E^1}{\rho_E^1}\right) - \Dlt a_{1,1} \grd p(\rho_E^1)+
            \Dlt a_{1,1} \rho_E^1 \grd \phi^1_I,  \label{eq:ep_SIDAE_q_1} \\  
    \veps^2 \Delta \phi^1_I & = \rho^1_I - 1. \label{eq:ep_SIDAE_phi_1} 
  \end{align}
Instead of solving the above linearly implicit system for $\rho^1_I,
q_I^1$ and $\phi_I^1$, we use an elliptic reformulation. To this end
we take the divergence of the momentum update \eqref{eq:ep_SIDAE_q_1}
to obtain $\dvg q_I^1$ and, follow it up by substituting the
expression thus obtained in place of $\dvg q_I^1$ in
\eqref{eq:ep_SIDAE_rho_1} to derive, 
\begin{equation}
  \label{eq:ep_SIDAE_rho_1A}
  \rho^1_I = \rho^n - \Dlt a_{1,1} \dvg q^n+\Dlt^2 a_{1,1}^2 \grd^2
  \colon \left(\frac{q_E^1 \otimes q_E^1}{\rho_E^1} + p(\rho_E^1)
    I\right)-\Dlt^2 a_{1,1}^2 \dvg(\rho_E^1 \grd \phi_I^1). 
\end{equation}
Next, we eliminate $\rho^1_I$ between \eqref{eq:ep_SIDAE_phi_1} and
\eqref{eq:ep_SIDAE_rho_1A} to get the following reformulated
elliptic equation for $\phi_I^1$: 
\begin{equation}\label{eq:ep_s1_eref_schm}
  \dvg\left(\left(\veps^2 + \Dlt^2 a^2_{1,1}\rho_E^1\right)\grd \phi^1_I\right)
  = \rho^n - 1 - \Dlt a_{1,1} \dvg q^n + \Dlt^2 a_{1,1}^2 \grd^2
  \colon \left(\frac{q_E^1 \otimes q_E^1}{\rho_E^1} + p(\rho_E^1)
    I\right) . 
\end{equation}
The above elliptic problem is solved to obtain $\phi_I^1$. The values
of $q_I^1$ and $\rho_I^1$ are then computed using the anteriorly
in-hand value of $\phi_I^1$, via the momentum update
\eqref{eq:ep_SIDAE_q_1} and the mass update \eqref{eq:ep_SIDAE_rho_1},
chronologically. Note that these two updates performed following the
prescribed sequence, are explicit evaluations for $q_I^1$ and
$\rho_I^1$ . Now for each $k = 2$ to $s$, we compute $\rho_E^k$ and  
$q_E^k$  explicitly using \eqref{eq:ep_ue}. Subsequently, using the
evaluated values of $\rho_E^k$ and $q_E^k$, the explicit parts
$\hat{\rho}_I^{k}$ and $\hat{q}_I^{k}$, of the implicit step
\eqref{eq:ep_SIIMEXDAE_k_ode} are calculated using
\eqref{eq:ep_SIDAE_ref_rho_hat_k}-\eqref{eq:ep_SIDAE_ref_q_hat_k}. Finally,
the implicit step \eqref{eq:ep_SIIMEXDAE_k_ode} can be written in
terms of $\hat{\rho}_I^k$ and $\hat{q}_I^k$, in the following compact
form:
\begin{align}
  \rho^k_I  &= \hat{\rho}_I^k - \Dlt a_{k,k} \dvg q_I^k, \label{eq:ep_SIDAE_rho_k} \\
  q_I^k & = \hat{q}_I^k + \Dlt a_{k,k} \rho_E^k \grd
          \phi^k_I,  \label{eq:ep_SIDAE_q_k} \\   
  \veps^2 \Delta \phi^k_I & = \rho^k_I - 1. \label{eq:ep_SIDAE_phi_k}  
\end{align}
In order to solve
\eqref{eq:ep_SIDAE_rho_k}-\eqref{eq:ep_SIDAE_phi_k} for $\rho^k_I$,
$q^k_I$ and $\phi^k_I$, we use an
elliptic reformulation, similar to the one performed for $k = 1$,
followed by two explicit evaluations. Eliminating $\rho_I^k$ and
$q_I^k$ amongst \eqref{eq:ep_SIDAE_rho_k}-\eqref{eq:ep_SIDAE_phi_k},
yields the elliptic equation \eqref{eq:ep_SIDAE_ref_phi_k} for $\phi_I^k$. Once
$\phi_I^k$ is known, $q_I^k$ and $\rho_I^k$ are computed using
explicit evaluations of the momentum update \eqref{eq:ep_SIDAE_q_k}
and the mass update \eqref{eq:ep_SIDAE_rho_k}, successively. Ultimately,
the numerical solution $(\rho^{n+1}, q^{n+1}, \phi^{n+1})$ is updated as 
\begin{equation}
  \rho^{n+1} = \rho_I^{s}, \quad q^{n+1} = q_I^{s}, \quad \phi^{n+1}=
  \phi^{s}_I. 
\end{equation}
Therefore a solution to the SI-IMEX-RK-DAE scheme is a solution of
the reformulated scheme in Definition~\ref{defn:ep_si-imex_ref}. The
converse can be proved in an analogous way, similar to \cite{Deg13}. 
\end{proof}
We note that the above proof also serves as a step by step algorithm
to sequentially obtain the numerical solution of the reformulated
SI-IMEX-RK-DAE
\eqref{eq:ep_ref_rhoe_TSD}-\eqref{eq:ep_SIDAE__ref_rho_k} scheme. We
see that, each stage of the SI-IMEX-RK-DAE   
scheme comprises of three sub-steps: first, an explicit evaluation,
second, solution of an elliptic equation, third and last, another explicit
evaluation to update the solution of an implicit step. In other words,
the numerical solution that we compute in this paper is not the
solution of the actual SI-IMEX-RK-DAE scheme
\eqref{eq:ep_ue}-\eqref{eq:ep_SIIMEXDAE_un+}; rather it is the
solution of a reformulated Poisson equation
\eqref{eq:ep_SIDAE_ref_phi_k} coupled with the mass
\eqref{eq:ep_SIDAE__ref_rho_k} and momentum
\eqref{eq:ep_SIDAE__ref_q_k} updates of the original numerical scheme 
\eqref{eq:ep_ue}-\eqref{eq:ep_SIIMEXDAE_un+} which are now explicit in
nature. 
\begin{remark}
  Although the SI-IMEX scheme
  \eqref{eq:ep_ue}-\eqref{eq:ep_SIIMEXDAE_un+} for the non-linear
  EP system \eqref{eq:ep_nd_mass}-\eqref{eq:ep_nd_poi} is formulated
  as a semi-implicit scheme, the reformulation stated in
  Definition~\ref{defn:ep_si-imex_ref} shows that it is only   
  linearly implicit.
\end{remark}
As observed for the additively partitioned IMEX-RK-DAE scheme here also
we see that by making a suitable choice of the RK tableaux; see
Figure~\ref{fig:ep_imex_euler}, the first order AP scheme presented in
\cite{Deg13, DLV08} can be obtained as:
\begin{align}
  \rho^{n+1} &= \rho^{n} - \Dlt \dvg q^{n+1}, \label{eq:ep_euler_mass}\\
  q^{n+1} &= q^n - \Dlt \dvg \left( \frac{q^n \otimes q^n}{\rho^n} +
            p(\rho^n) I\right) + \Dlt \rho^n \grd
            \phi^{n+1}, \label{eq:ep_euler_mom}\\ 
  \veps^2 \Delta \phi^{n+1} &= \rho^{n+1} -1. \label{eq:ep_euler_phi}
\end{align}
That being the case we come to the understanding that the
SI-IMEX-RK-DAE scheme \eqref{eq:ep_ue}-\eqref{eq:ep_SIIMEXDAE_un+}
serves as a natural higher order extension to the first order scheme
\eqref{eq:ep_euler_mass}-\eqref{eq:ep_euler_phi}. 
\begin{figure}[htbp]
  \centering
  \small
  \begin{tabular}{c|c }
    0   & 0	\\
    \hline 
        & 1	
  \end{tabular}
  \hspace{10pt}
  \begin{tabular}{c|c }
    1   & 1    \\
    \hline 
        & 1	
  \end{tabular}.
  \caption{Double Butcher tableau of SI-IMEX-Euler (1,1,1)
    scheme.} 
  \label{fig:ep_imex_euler}
\end{figure}
We refer the reader to \cite{Deg13, DLV08} and the references therein
for a detailed study of the first order scheme; particularly, its
computational cost compared to the so-called classical scheme;
cf. \cite{Deg13, DLV08}. As stated there it is imperative to
understand that the computational time expense typically associated
with any amount of implicitness doesn't affect our scheme adversely.

\section{Analysis of the Time Semi-discrete SI-IMEX-RK-DAE Scheme}  
\label{sec:anal_TSD}
In this section we study the AP property of the time semi-discrete
scheme introduced in Section~\ref{subsec:SI-IMEX-DAE-TD}. It has been
detailed, e.g.\ in \cite{CDV_ASP_05,CDV07,DLV08}, that standard
(classical) discretisation methods applied to the Euler-Poisson system
do not yield the correct quasineutral limit when subjected to 
very small Debye length $\veps$. In order to overcome this shortcoming
of classical discretisation techniques, an AP scheme which is first
order accurate is proposed and analysed in the above references.

We have observed in the preceding section that the AP scheme
presented in the above mentioned references is a first order variant of the
proposed SI-IMEX-RK-DAE scheme. Under the AP framework these schemes
are designed to perform for a wide range of $\veps$: from non-stiff
regimes $\veps=\mcal{O}(1)$ to stiff regimes where $\veps \to
0$. A scheme's attribute of being AP reflects that it performs
uniformly with respect to $\veps$.  The performance referred here
pertains to the unprejudiced stability - asymptotic stability, and
consistency in the limit of the singular parameter - asymptotic
consistency of the method. In what follows we provide a formal proof
for the asymptotic consistency of the time semi-discrete
SI-IMEX-RK-DAE scheme with the quasineutral limit system and also
post a short discussion regarding the asymptotic stability of the
scheme; see also \cite{DLV08} for related discussions. 

\subsection{Asymptotic Consistency}
\begin{theorem}\label{thm:ep_TSD_AP}
  Suppose that the data at time $t^n$ are well-prepared, i.e.\
  $\rho^n$ and $q^n$ admit the decomposition
  \eqref{eq:ep_wp_data_ansz}-\eqref{eq:ep_wp_data_constrnt}. Then, for
  a SA-DIRK scheme, if the intermediate solutions $(\rho_E^k, q^k_E,
  \phi_E^k)$, $(\rho_I^k, q^k_I, \phi_I^k)$ admit the ansatz
  \eqref{eq:ep_wp_data_ansz}, then it follows that 
  \begin{align}
    \lim_{\veps \to 0} \rho^k_E &= \rho^k_{E,(0)}= 1, \quad \lim_{\veps \to
                                  0} \rho^k_I = \rho^k_{I,(0)}= 1,\\ 
    \lim_{\veps \to 0} \dvg q_E^k &= \dvg q^k_{E, (0)}= 0, \quad \lim_{\veps
                                    \to 0} \dvg q_I^k = \dvg q^k_{I,
                                    (0)}=0, 
  \end{align}
  i.e.\ they are well-prepared. As a result, the numerical solution
  $(\rho^{n+1}, q^{n+1}, \phi^{n+1})^T$ after one timestep is
  consistent with the reformulated incompressible limit
  \eqref{eq:ep_div_ref}-\eqref{eq:ep_mass_ref_lim}; in the sense that
  an SI-IMEX-RK-DAE scheme based on an SA-DIRK scheme transforms to an
  $s$-stage scheme for the incompressible Euler system in the
  quasineutral limit $\veps \to 0$. In other words, the SI-IMEX-RK-DAE 
  scheme \eqref{eq:ep_ue}-\eqref{eq:ep_SIIMEXDAE_k_alg} is
  asymptotically consistent.   
\end{theorem}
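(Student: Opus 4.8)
The plan is to perform the asymptotic analysis not on the scheme \eqref{eq:ep_ue}--\eqref{eq:ep_SIIMEXDAE_k_alg} directly but on the equivalent reformulated scheme of Definition~\ref{defn:ep_si-imex_ref} (permissible by Proposition~\ref{prop:ep_ref_equiv}), and to argue by induction on the stage index $k$. At each stage I would substitute the well-prepared ansatz \eqref{eq:ep_wp_data_ansz} for both the explicit ($E$) and the implicit ($I$) intermediate quantities into the reformulated updates \eqref{eq:ep_ref_rhoe_TSD}--\eqref{eq:ep_SIDAE__ref_rho_k}, expand in $\veps$, and collect the leading ($\veps^0$) order, using throughout that $p(\rho_{(0)}) = p(1) = \const$ (so $\grd p(1) = 0$ and $\grd^2\colon(p(1)I) = 0$) and that $\rho_{(0)} = 1$ forces $q_{(0)} = u_{(0)}$.

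For the base case $k = 1$ the explicit stage is trivial, $\rho_E^1 = \rho^n$, $q_E^1 = q^n$, so well-preparedness of the data immediately gives $\rho_{E,(0)}^1 = 1$ and $\dvg q_{E,(0)}^1 = \dvg q_{(0)}^n = 0$. Passing to the limit in the reformulated elliptic equation \eqref{eq:ep_s1_eref_schm}, and using $\rho^n - 1 = \mcal{O}(\veps^2)$ with $\dvg q_{(0)}^n = 0$, I would obtain the pressure--Poisson relation $\Delta \phi_{I,(0)}^1 = \grd^2\colon(u_{(0)}^n \otimes u_{(0)}^n)$, i.e.\ \eqref{eq:ep_mass_ref_lim} at the first stage. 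Feeding this into the leading order of the momentum update \eqref{eq:ep_SIDAE_q_1} and taking a divergence gives $\dvg q_{I,(0)}^1 = 0$, and the mass update \eqref{eq:ep_SIDAE_rho_1} (equivalently the algebraic constraint \eqref{eq:ep_SIDAE_phi_1}) gives $\rho_{I,(0)}^1 = 1$.

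For the inductive step I would assume that every stage $\ell \le k-1$ is well-prepared and satisfies $\Delta\phi_{I,(0)}^\ell = \grd^2\colon(u_{(0)}^\ell\otimes u_{(0)}^\ell)$. The limit of \eqref{eq:ep_ref_rhoe_TSD}--\eqref{eq:ep_ref_qe_TSD} then gives $\rho_{E,(0)}^k = 1$ (each $\dvg q_{I,(0)}^\ell$ vanishes) and, after a divergence and use of the stage-$\ell$ pressure--Poisson relations, $\dvg q_{E,(0)}^k = 0$, so $u_{(0)}^k := q_{E,(0)}^k$ is divergence free. The one computation that needs care is the leading order of the reformulated elliptic equation \eqref{eq:ep_SIDAE_ref_phi_k}: because the flux sums defining $\hat q_I^k$ in \eqref{eq:ep_SIDAE_ref_q_hat_k} run up to $\ell = k$ whereas the electric-force sum runs only to $\ell = k-1$, one finds $\hat\rho_{I,(0)}^k = 1$ and $\dvg\hat q_{I,(0)}^k = -\Dlt a_{k,k}\,\grd^2\colon(u_{(0)}^k\otimes u_{(0)}^k)$, so the right-hand side collapses to $\Dlt^2 a_{k,k}^2\,\grd^2\colon(u_{(0)}^k\otimes u_{(0)}^k)$ while the left-hand side is $\Dlt^2 a_{k,k}^2\,\Delta\phi_{I,(0)}^k$ (using $\rho_{E,(0)}^k = 1$ and $a_{k,k}\ne 0$); hence the stage-$k$ relation $\Delta\phi_{I,(0)}^k = \grd^2\colon(u_{(0)}^k\otimes u_{(0)}^k)$. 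Substituting this into the leading order of \eqref{eq:ep_SIDAE__ref_q_k}, \eqref{eq:ep_SIDAE__ref_rho_k} and taking a divergence gives $\dvg q_{I,(0)}^k = 0$ and $\rho_{I,(0)}^k = 1$, closing the induction.

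Finally, since the scheme is stiffly accurate the update coincides with the last implicit stage, $(\rho^{n+1},q^{n+1},\phi^{n+1}) = (\rho_I^s,q_I^s,\phi_I^s)$, so the $k = s$ case yields $\rho_{(0)}^{n+1} = 1$, $\dvg u_{(0)}^{n+1} = 0$, and $\Delta\phi_{(0)}^{n+1} = \grd^2\colon(u_{(0)}^{n+1}\otimes u_{(0)}^{n+1})$; reading off the leading-order stage equations, the $\veps\to0$ limit of the scheme is precisely the $s$-stage SI-IMEX-RK discretisation of the reformulated incompressible Euler system \eqref{eq:ep_mass_ref_lim}--\eqref{eq:ep_div_ref} --- a divergence-free velocity and a pressure--Poisson relation at each stage, implicit momentum stages $u_{(0)}^k = u_{(0)}^n - \Dlt\sum_{\ell=1}^{k}a_{k,\ell}\big(\dvg(u_{(0)}^\ell\otimes u_{(0)}^\ell) - \grd\phi_{(0)}^\ell\big)$, and a stiffly accurate update --- which is the claimed asymptotic consistency. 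I expect the only genuine difficulty to be the bookkeeping of the $\veps$-expansion through all the stages: keeping the mismatched summation ranges in $\hat q_I^k$ straight and repeatedly converting $\grd^2\colon(u\otimes u)$ into $\Delta\phi$ via the stagewise pressure--Poisson relation; one also uses the standing property $a_{k,k}\ne 0$ of the SA-DIRK tableaux employed.
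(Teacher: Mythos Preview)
Your proposal is correct and follows essentially the same route as the paper: induction on the stage index applied to the reformulated scheme of Definition~\ref{defn:ep_si-imex_ref} (invoking Proposition~\ref{prop:ep_ref_equiv}), deriving the stagewise pressure--Poisson relation from the limit of \eqref{eq:ep_SIDAE_ref_phi_k}, and then reading off $\dvg q_{I,(0)}^k=0$, $\rho_{I,(0)}^k=1$ from the momentum and mass updates before using stiff accuracy for the final step. One small slip: in your closing line the relation $\Delta\phi_{(0)}^{n+1}=\grd^2\colon(u_{(0)}^{n+1}\otimes u_{(0)}^{n+1})$ should have $u_{E,(0)}^s$ on the right (not $u_{I,(0)}^s=u_{(0)}^{n+1}$), as the stage-$s$ elliptic equation is built from the explicit stage values; this does not affect consistency but is worth keeping straight.
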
 
\begin{proof}
  We use induction on the number stages ($s$) to prove the
  theorem. Owing to the equivalence between the SI-IMEX-RK-DAE scheme
  \eqref{eq:ep_ue}-\eqref{eq:ep_SIIMEXDAE_un+} and its reformulation
  \eqref{eq:ep_ref_rhoe_TSD}-\eqref{eq:ep_SIDAE__ref_rho_k}
  cf. Proposition~\ref{prop:ep_ref_equiv}, we prove the consistency
  for the reformulated scheme instead.
  
  As a starting step, consider the first stage, i.e.\ for $k = 1$. We
  have from \eqref{eq:ep_ref_rhoe_TSD} and \eqref{eq:ep_ref_qe_TSD}
  \begin{align}
    \rho_{E, (0)}^1&=\lim_{\veps \to 0}\rho_E^1 = \rho^{n}_{(0)}
                     = 1, \\
    \dvg q_{E,  (0)}^1&=\lim_{\veps \to 0} \dvg q_{E}^1 = \dvg
                        q^n_{(0)} = 0.   
  \end{align}
  As a consequence of the above two equations, we obtain the divergence
  condition, $\dvg u_{E,  (0)}^1=0$. Next, we take equation
  \eqref{eq:ep_SIDAE_ref_phi_k} into consideration.  In the limit of 
  $\veps \to 0$, it furnishes the following elliptic equation, for the
  limiting electric potential $\phi_{I,(0)}^1$: 
  \begin{equation}\label{eq:ep_s1_eref_schm_lim}
    \Delta \phi^1_{I, (0)} = \grd^2 \colon \left(u_{E,(0)}^1 \otimes
      u_{E,(0)}^1\right).
  \end{equation}
  We proceed further by letting $\veps\to 0$ in the implicit mass and
  momentum updates, \eqref{eq:ep_SIDAE__ref_rho_k} and
  \eqref{eq:ep_SIDAE__ref_q_k}, respectively, to yield   
  \begin{align}
    \rho^1_{I, (0)} & = 1 - \Dlt a_{1,1} \dvg q_{I,
                      (0)}^1, \label{eq:ep_SIDAE_rho_1_lim}\\ 
    q_{I, (0)}^1 & = u^n_{(0)} - \Dlt a_{1,1} \dvg \left(u_{E, (0)}^1 \otimes
                   u_{E,(0)}^1\right) + \Dlt a_{1,1} \grd \phi^1_{I,
                   (0)}.  \label{eq:ep_SIDAE_q_1_lim} 
  \end{align}
  Taking divergence of the above equation \eqref{eq:ep_SIDAE_q_1_lim}
  and using the value of $\Delta \phi^1_{I, (0)}$ from \eqref
  {eq:ep_s1_eref_schm_lim} in the expression thus obtain, we get the
  following:
  \begin{equation}
    \dvg q_{I, (0)}^1 = 0,
  \end{equation}
  Now, using the above divergence-free expression for the limit momentum
  in the mass update \eqref{eq:ep_SIDAE_rho_1_lim} forthright yields the
  quasineutrality condition:
  \begin{equation}
    \rho_{I,{(0)}}^1 = 1,
  \end{equation}
  for the limiting density. Summarising the above steps, for the first
  stage of the limiting scheme, we gather the following limiting scheme
  for $k =1$.
  
  For the explicit part:   
  \begin{align}
    \rho_{E, (0)}^1 &=  \rho_{(0)}^n =  1, \\
    \dvg q_{E, (0)}^1 &= \dvg q^n_{(0)} = 0.
  \end{align}
  
  For the implicit part:
  \begin{align}
    \rho^1_{I, (0)} & = 1, \label{eq:ep_SIDAE_rho_1_limf}\\ 
    u_{I, (0)}^1 & = u^n_{(0)} - \Dlt a_{1,1} \dvg \left(u_{E, (0)}^1 \otimes
                   u_{E,(0)}^1\right) + \Dlt a_{1,1} \grd \phi^1_{I,
                   (0)},  \label{eq:ep_SIDAE_q_1_limf} \\ 
    \Delta \phi^1_{I, (0)} &= \grd^2 \colon \left(u_{E,(0)}^1 \otimes
                             u_{E,(0)}^1\right), \label{eq:ep_SIDAE_dvg_1_limf}
  \end{align}
  which provisions the divergence free condition, $\dvg q_{I, (0)}^1 =
  \dvg u_{I, (0)}^1 =  0$. Clearly,
  \eqref{eq:ep_SIDAE_rho_1_limf}-\eqref{eq:ep_SIDAE_dvg_1_limf} is a 
  consistent discretisation of the reformulated incompressible Euler
  system \eqref{eq:ep_mass_lim}-\eqref{eq:ep_poi_lim_}. 
  
  For $k = 2$ onwards, proceeding as done in the case of $k = 1$ we
  can obtain 
  
  For the explicit part:   
  \begin{align}
    \rho_{E, (0)}^{k} &=  \rho_{(0)}^n =  1, \label{eq:ep_SIDAE_rE_s-1_elimf}\\
    u_{E, (0)}^{k} & = u^n_{(0)} - \Dlt \sum_{\ell=1}^{k-1}
                     \tilde{a}_{k,\ell} \dvg \left(u_{E, (0)}^{\ell}
                     \otimes u_{E,(0)}^{\ell}\right) + \Dlt
                     \sum_{\ell=1}^{k-1} \tilde{a}_{k,\ell} \grd
                     \phi^{\ell}_{I,
                     (0)},  \label{eq:ep_SIDAE_uE_s-1_elimf}
  \end{align}
  and consequently, $\dvg q_{E, (0)}^{k} = \dvg u_{E,(0)}^{k}=0$.
  
  For the implicit part:
  \begin{align}
    \rho^{k}_{I, (0)} & = 1, \label{eq:ep_SIDAE_rho_s-1_limf}\\ 
    u_{I, (0)}^{k} & = u^n_{(0)} - \Dlt \sum_{\ell=1}^{k} a_{k,\ell} \dvg
                     \left(u_{E, (0)}^{\ell} \otimes
                     u_{E,(0)}^{\ell}\right) + \Dlt \sum_{\ell=1}^k
                     a_{k,\ell} \grd \phi^{\ell}_{I,
                     (0)},  \label{eq:ep_SIDAE_q_s-1_limf} \\
    \Delta \phi^{k}_{I, (0)} &= \grd^2 \colon \left(u_{E,(0)}^{k}
                               \otimes
                               u_{E,(0)}^{k}\right), \label{eq:ep_SIDAE_phi_s-1_limf} 
  \end{align}
  and ergo, the divergence condition:
  \begin{equation}
    \label{eq:ep_SIDAE_dvg_s-1_limf}
    \dvg q_{I, (0)}^k = \dvg u_{I, (0)}^k = 0. 
  \end{equation}
Since the SI-IMEX-RK scheme under consideration is GSA, the numerical
solution at $t^{n+1}$ is same as the solution of the implicit step for
$k  = s$. Therefore we have  
\begin{align}
  \rho^{n+1}_{(0)} & = 1, \label{eq:ep_SIDAE_rho_np1_limf}\\ 
  u_{(0)}^{n+1} & = u^n_{(0)} - \Dlt \sum_{\ell = 1}^{s}a_{s,\ell}
                  \dvg \left(u_{E, (0)}^{\ell} \otimes 
                  u_{E,(0)}^{\ell}\right) + \Dlt \sum_{\ell =
                  1}^{s}a_{s,\ell} \grd \phi^{\ell}_{I,
                  (0)},  \label{eq:ep_SIDAE_q_np1_limf} \\
  \Delta \phi^{n+1}_{(0)} &= \grd^2 \colon \left(u_{E,(0)}^{s}
                            \otimes
                            u_{E,(0)}^{s}\right) \label{eq:ep_SIDAE_phi_ss_limf}, 
\end{align}
and consequently the divergence condition $\dvg q_{I, (0)}^{n+1} = \dvg
u_{I, (0)}^{n+1} = 0$, i.e.\ the numerical solution
$(\rho^{n+1},q^{n+1},\phi^{n+1})$ 
is also well prepared. Hence, we conclude that the limiting scheme is
a consistent approximation of the reformulated incompressible system.  
\end{proof}
\begin{remark}
  Note that even though the data at time level $t^n$ is
  non-wellprepared, still the SI-IMEX-RK scheme will be able to convert
  the following iterates into well-prepared data. For example, if the
  momentum $q^n$ is not well-prepared but the density $\rho^n$ is,
  then owing to the elliptic problem for $\phi^{n+1}$ leads to the
  divergence free condition for $q^{n+1}$ in the limit of $\veps \to
  0$. Similarly, if none of the initial data were well-prepared then it
  takes two iterates i.e. at time level $t^{n+2}$ the data is made
  well-prepared by the SI-IMEX-RK scheme. This is in alignment with
  the remark in \cite{Deg13} regarding the first order AP scheme. 
\end{remark}
\subsection{Asymptotic Stability}
In \cite{DLV08} the authors have carried out a thorough linearised
$L^2$-stability analysis of a semi-implicit time stepping scheme which
corresponds to the first order variant of the SI-IMEX-RK-DAE scheme
given in Figure~\ref{fig:ep_imex_euler}. Through a Fourier analysis
which captures the effects of a centred space discretisation it has
been discovered that the above scheme scheme fails to be uniformly
stable. As a remedy to this some amount of artificial diffusions have
been introduced into the mass and the momentum updates to retain
stability; see also \cite{CDV07}. Though the numerical viscosity added
is minimal, as a consequence, the transformed system corresponds to an
upwind space discretisation. Furthermore, it has been shown the
stability characteristics of the corrected scheme do not degrade with
$\veps$ in the asymptotic range and elsewhere as well; see also
\cite{CDV16} for a similar analysis towards proving the stability of
a first order SI-IMEX-RK-DAE scheme.    

As is concluded in \cite{DLV08} and reiterated in \cite{CDV16}, it is
imperative to add some amount of artificial viscosity to the
hydrodynamic equations, for the numerical scheme to be
stable in the fully discrete setup. Adhering to this we introduce the
necessary viscosity terms to the SI-IMEX-RK-DAE scheme while designing
the space-time fully discrete scheme to be presented in
Section~\ref{sec:ep_space-time_fully-disc}. A detailed stability
analysis for the second order version of the SI-IMEX-RK-DAE scheme is 
quite a tasking job and is a subject matter for some future work.

\section{Space-Time Fully-Discrete Scheme And Its Analysis}
\label{sec:ep_space-time_fully-disc}
This section is devoted to the construction of a space-time fully-discrete 
scheme corresponding to the SI-IMEX-RK-DAE scheme introduced in
Section~\ref{sec:ch_qn_time_sd}, and its asymptotic analysis. To this
end, we avail a similar approach as is employed in \cite{AS20}, where,
a fully-discrete scheme similar in its spirit to the one to be defined
in this paper,  for the low Mach number limit of the compressible
Euler system was obtained using a coalescence of standard Rusanov-type
flux for the explicit fluxes and second-order central differencing for
the implicit derivative terms. A first-order fully discrete schemes
and its asymptotic analysis can be found in \cite{Deg13,
  DLV08}. Subsequently, in this section, we also briefly discuss the
asymptotic preserving properties of the fully discrete scheme, thus to
be defined.  
 
\subsection{Space-Time Fully-Discrete Scheme}
For ease of presentation, we introduce the following notations for
the flux functions and the source term:
\begin{equation}
  \label{eq:ep_flux_source_m}
  F_m(U)=
  \begin{pmatrix}
    F_{\rho,m}(U)\\
    F_{q,m}(U)
  \end{pmatrix}=
  \begin{pmatrix}
    q_m\\
    \frac{q_m}{\rho}q+pe_m
  \end{pmatrix}, \quad
  S_m(\phi)=
  \begin{pmatrix}
    S_{\rho,m}(\phi)\\
    S_{q,m}(\phi)
  \end{pmatrix}=
  \begin{pmatrix}
    0\\
    \phi e_m,
  \end{pmatrix},
\end{equation}
where $e_m$ denotes the $m^{th}$ unit vector in $\mbb{R}^3$, for
$m=1,2,3$.  We rewrite the reformulated time semi-discrete scheme
\eqref{eq:ep_ref_rhoe_TSD}-\eqref{eq:ep_SIDAE__ref_rho_k} in terms of
\eqref{eq:ep_flux_source_m} as follows. For the explicit stages,
\begin{align}
  \rho_E^k &= \rho^n -\Dlt \sum_{\ell=1}^{k-1}\tilde{a}_{k, \ell}
             \sum_{m=1}^3\Dxm F_{\rho, m} (U_I^{\ell}), \\ 
  q_E^k &= q^n -\Dlt \sum_{\ell=1}^{k-1} \tilde{a}_{k, \ell}
          \sum_{m=1}^3 \Dxm F_{q, m} (U_E^{\ell}) + \Dlt
          \sum_{\ell=1}^{k-1} \tilde{a}_{k, \ell} \rho_E^{\ell}
          \sum_{m=1}^3 \Dxm S_{q,m} (\phi_I^{\ell}). 
\end{align}
For the explicit part of the implicit stages,
\begin{align}
  \hat{\rho}_I^k &= \rho^n -\Dlt \sum_{\ell=1}^{k-1} {a}_{k, \ell}
                   \sum_{m=1}^3 \Dxm F_{\rho, m} (U_I^{\ell}), \\ 
  \hat{q}_I^k &= q^n -\Dlt \sum_{\ell=1}^{k} {a}_{k, \ell}
                \sum_{m=1}^3 \Dxm  F_{q, m} (U_E^{\ell}) + \Dlt
                \sum_{\ell=1}^{k-1} {a}_{k, \ell} \rho_E^{\ell}
                \sum_{m=1}^3 \Dxm S_{q,m} (\phi_I^{\ell}). 
\end{align}
Finally, the implicit solution is obtained by solving,
\begin{equation}
  \sum_{m=1}^3 \Dxm\left(\left(\veps^2 + \Dlt^2a_{k,k}^2\rho_E^k
    \right) \Dxm \phi^k_I\right) =\hat{\rho}_I^k-\Dlt
  a_{k,k}\sum_{m=1}^3
  \D_{x_m}\hat{q}_{I,m}^k-1, \label{eq:ep_sk_eref_schm_dfn}
\end{equation}
and following up with the updates,
\begin{align} 
  q_{I,m}^k  &= \hat{q}_{I,m}^k  + \Dlt a_{k, k} \rho_E^{k}
               \sum_{m=1}^3 \Dxm
               \phi_I^{k}, \label{eq:ep_ref_qi_TSD_re}\\  
  \rho_I^k &= \hat{\rho}_I^k -\Dlt a_{k, k} \sum_{m=1}^3 \Dxm
             q_{I,m}^k. \label{eq:ep_ref_rhoi_TSD_re}    
\end{align}
In order to present the fully-discrete scheme, we introduce the
vector $i = (i_1, i_2, i_3)$ where each $i_m$ for $m = 1,2,3 $
represent $x_m$ the space direction. We further introduce the
following finite difference and averaging operators: e.g.\ in the 
$x_m$-direction  
\begin{equation}
  \label{eq:ep_deltax1}
  \delta_{x_m}w_{i}=w_{i+\frac{1}{2}e_m}-w_{i-\frac{1}{2}e_m},
  \quad
  \mu_{x_m}w_{i}=\frac{w_{i+\frac{1}{2}e_m}+w_{i-\frac{1}{2}e_m}}{2},
\end{equation}
for any grid function $w_{i}$. 
\begin{definition} \label{defn:ep_FD_scheme}
  The $k^{th}$ stage of an $s$-stage fully-discrete SI-IMEX-RK-DAE scheme
  for the Euler-Poisson system is defined as follows. The explicit
  solution is obtained by 
  \begin{align}
    \rho_{E, i}^k &= \rho^n_{i} - \sum_{\ell=1}^{k-1}
                      \tilde{a}_{k, \ell} \sum_{m=1}^3 \nu_{m} 
                      \delta_{x_m} \mcal{F}_{\rho,m, i}^{\ell} \\
    q_{E, i}^k &= q^n_i - \sum_{\ell=1}^{k-1} \tilde{a}_{k, \ell}
                    \sum_{m=1}^3 \nu_{m} \delta_{x_m} \mcal{F}_{q, m,
                 i}^{\ell} + \sum_{\ell=1}^{k-1} \tilde{a}_{k,
                   \ell} \sum_{m=1}^3 \nu_{m} \rho_{E, i}^{\ell}
                   \delta_{x_m} \mcal{S}_{q, m, i}^{\ell}.
  \end{align}
  In an analogous manner, the explicit part of the implicit stage is
  computed via,
  \begin{align}
    \hat{\rho}_{I, i}^k &= \rho^n_{i} -\sum_{\ell=1}^{k-1} {a}_{k,
                            \ell}  \sum_{m=1}^3 \nu_{m} \delta_{x_m}
                            \mcal{F}_{\rho,m,i}^{\ell}, \\ 
    \hat{q}_{I, i}^k &= q^n_i - \sum_{\ell=1}^{k}{a}_{k,\ell}
                         \sum_{m=1}^3 \nu_{m}
                         \delta_{x_m} \mcal{F}_{q, m, i}^{\ell}
                         + \sum_{\ell=1}^{k-1} {a}_{k, \ell}
                         \sum_{m=1}^3 \nu_{m} \rho_{E, i}^{\ell}
                       \delta_{x_m} \mcal{S}_{q, m, i}^{\ell}. 
  \end{align}
  Then the implicit solution is obtained by first solving the discrete
  elliptic problem which is a linear system for $\{ \phi^k_{I,i} \}$
  given by,  
  \begin{equation}
   \sum_{m=1}^3  \frac{\delta_{x_m}}{\Delta x_m} \left(\left(\veps^2 +
       \rho_{E, i}^k\right) \frac{\delta_{x_m}}{\Delta x_m} \phi^k_{I, i}\right) =
    \hat{\rho}_{I,i}^k-a_{k,k}\sum_{m=1}^3
    \nu_m\delta_{x_m}\hat{q}_{I,m, i}^k, \label{eq:ep_sk_eref_schm_dfn_FD_phi} 
  \end{equation}
  followed by the explicit evalutaions, 
  \begin{align}
    q_{I, i}^k &= \hat{q}_{I,i}^k + a_{k, k} \sum_{m=1}^3 \nu_{m}
                   \rho_{E, i}^{k}\delta_{x_m} \phi_{I,
                 i}^{k}, \label{eq:ep_sk_eref_schm_dfn_FD_q} \\ 
    \rho_{I, i}^k &= \hat{\rho}^k_{I,i} - a_{k, k} \sum_{m=1}^3
                      \nu_{m} \delta_{x_m}q_{I,m, i}^k + a_{k,k}
                      \sum_{m=1}^3 \nu_{m} \delta_{x_m}
                      \mcal{G}^k_{I,m,i}.  \label{eq:ep_sk_eref_schm_dfn_FD_rho} 
  \end{align}
Here, the repeated index $m$ takes values in $\{1, 2, 3\}$, $\nu_{m}:=
\frac{\Dlt}{\Delta x_m}$ denote the mesh ratios and $\mcal{F}_{\rho,
  m}, \mcal{F}_{q, m}$ and $\mcal{S}_{m}$ are, respectively, the
numerical fluxes and numerical source term used to approximate the
physical fluxes and physical source term $F_{\rho, m}, F_{q, m}$ and
$S_{m}$. Analogous to \cite{AS20}, in this section we have used a
simple Rusanov-type flux for $\mcal{F}_{q, m}$ and central flux for
$\mcal{F}_{\rho, m}$ and $\mcal{S}_m$.  The term $\mcal{G}_{I,m}$
denotes the artificial viscosity added in the mass equation to adhere
to the stability requirements discussed in Section~\ref{sec:anal_TSD}.
The numerical fluxes, source terms and the viscosity terms are defined
as
\begin{equation}\label{eq:ep_q1_flux_FD_ap}
\begin{aligned}
  &\mcal{F}_{\rho, m, i + \frac{1}{2} e_m}^k
  =\frac{1}{2}\left(F_{\rho,m}\left(U_{I, i + e_m}^{k}\right) +
    F_{\rho,m}\left(U_{I, i}^{k}\right)\right),  \\   
  &\mcal{F}_{q, m, i+\frac{1}{2}e_m}^k = \frac{1}{2}
  \left(F_{q,m}\left(U_{E,i+\frac{1}{2} e_m}^{k,+}\right)+F_{q,
                                        m}\left(U_{E, i+\frac{1}{2}
    e_m}^{k,-}\right) \right)-\frac{\alpha_{m, i + \frac{1}{2} e_m}}{2}
  \left(q_{m, E, i+\frac{1}{2}e_m}^{k,+} - q_{m, E, i+\frac{1}{2}
    e_m}^{k,-}\right), \\
 & \mcal{S}^k_{q,m, i+ \frac{1}{2}e_m} = \frac{1}{2} \left( S_{q,m}\left(\phi_{I, i+e_m}^{k}\right) +
                                    S_{q,m}\left(\phi_{I, i}^{k}\right) \right), \
  \mcal{G}^k_{I,m,i+\frac{1}{2}e_m} = \frac{1}{\nu_m} \left(
                                     \rho_{E,i+e_m}^{k} -
                                     \rho_{E,i}^{k}\right).
                                 \end{aligned}
 \end{equation}
Here, for any conservative variable $w$, we have denoted by
$w^\pm_{i+\frac{1}{2} e_m}$, the interpolated states obtained
using the piecewise linear reconstructions. The wave-speeds are 
computed as, e.g.\ in the $x_1$-direction
\begin{equation}
  \label{eq:wave_speed1}
  \alpha_{1, i
    +\frac{1}{2}e_1}:=2\max\left(\left|\frac{q_{1,E,i+\frac{1}{2}e_1}^{k,-}}{\rho_{E,i+\frac{1}{2}e_1}^{k,-}}\right|,
    \left|\frac{q_{1,E,i+\frac{1}{2}e_1}^{k,+}}{\rho_{E,i+\frac{1}{2}e_1}^{k,+}}\right|\right).  
\end{equation}
\end{definition}
\begin{remark}
  Note that we have only introduced additional viscosity into the
  mass updates but not to the momentum equations. This choice is
  in accordance with the Rusanov-type flux used for the explicit
  momenta flux which introduces the required viscosity by default. 
\end{remark}
\begin{remark}
The space discretisation of the time semi-discrete high order
classical scheme \eqref{eq:ep_ad_imex_ieu}- \eqref{eq:ep_IMEXDAE_un+}
is given as 
\begin{align}
  U^{n+1}_{i} &= U^n_{i} - \sum_{m=1}^3 \nu_{m} \delta_{x_m}
                  \mcal{F}_{m, i}^{n} + \sum_{m=1}^2 \nu_{m} \rho_{i}^{n+1} 
  \delta_{x_m} \mcal{S}_{m, i}^{n+1} \label{eq:ep_clas_schm_dfn_FD_ru} \\
  \veps^2 \sum_{m=1}^3 \left(\frac{\delta_{x_m}}{\Delta x_m}\right)^2
  \phi^{n+1}_{i} &=  \rho_{i}^{n+1} - 1, \label{eq:ep_clas_schm_dfn_FD_phi}
\end{align}
Where $\mcal{F}_m $ in \eqref{eq:ep_clas_schm_dfn_FD_ru} is
approximated using Rusanov fluxes similar to
\eqref{eq:ep_q1_flux_FD_ap}.   
\end{remark}

The eigenvalues of the Jacobians of $F_m(U)$ with respect to $U$ can
be obtained as $\ld_{m,1}= \ld_{m,2} =\frac{q_m}{\rho},
\ld_{m,3}=\frac{q_m}{\rho} - \sqrt{p^{\prime}(\rho)}$ and $\ld_{m,4}=
\frac{q_m}{\rho} + \sqrt{p^{\prime}(\rho)}$. Since the fluxes $F_m(U)$
for the classical scheme are entirely approximated by a Rusanov-type
flux, the timestep $\Dlt$ at time $t^n$ is computed by the CFL
condition: 
\begin{equation}
  \label{eq:ep_CFL}
  \Dlt\max_{i}\max_m\left(\frac{\left|\ld_{m,4}\left(U_{i}^n\right)\right|}{\Delta
      x_m}\right)=\nu, 
\end{equation} 
where $\nu<1$ is the given CFL number. For the SI-IMEX-RK-DAE
scheme the mass flux is approximated using simple central
differences. Hence, the eigenvalues of the Jacobians of the part of
the flux which is approximated by Rusanov-type flux can be obtained as
$\ld_{m,1}=0, \ld_{m,2}= \ld_{m,3}=\frac{q_m}{\rho}$ and $\ld_{m,4}= 2
\frac{q_m}{\rho}$ and the CFL condition is given by \eqref{eq:ep_CFL}. 

A detailed stability analysis of the first order AP scheme is carried
in \cite{DLV08}, where it has been shown that the scheme is stable
under a CFL-like condition independent of $\veps$. However, the
classical scheme
\eqref{eq:ep_clas_schm_dfn_FD_ru}-\eqref{eq:ep_clas_schm_dfn_FD_phi},
as it involves the solution of an elliptic problem containing the
stiffness parameter $\veps$, should additionally satisfy the stability
condition \cite{Fab92} given by 
\begin{equation}\label{eq:ep:stab_poisson}
\Dlt \leq \veps.
\end{equation} 

\subsection{Asymptotic Preserving Property}
\label{sec:ep_fully_disc_AP}
Here, we follow the approach of \cite{AS20} to analyse the high order
space-time fully discrete scheme, in-turn demonstrating that the space
discretisation complements the designed time-discretisation
optimally. The analysis of a first-order space-time fully discrete
scheme for the Euler-Poisson system can be found in \cite{CDV07,
  DLV08}. We prove that the space discretisation coupled with the 
time-discretisation is indeed asymptotically consistent with the
quasineutral limit, i.e.\ the scheme boils down to a consistent
discretisation of the incompressible Euler system when $\veps \to
0$. The stability properties of a first order version of the
SI-IMEX-RK-DAE scheme have been studied in \cite{DLV08}. We do 
not intend to provide the stability considerations here. 
\begin{definition}[Well-prepared data]
  A numerical solution $(\rho_{i}, q_{i}, \phi_{i})$ of the
  Euler-Poisson system is called well prepared if
  \begin{align}
    &\rho_{i} = \rho_{(0), i} + \veps^2 \rho_{(2), i}, \quad
    q_{i} = q_{(0), i} + \veps^2 q_{(2),
      i} \label{eq:ep_FD_wp_decomp}\\ 
    \mbox{and,} \quad \quad & \quad \quad \rho_{(0), i} = 1, \quad
                              \quad \quad \quad \quad
                              \hat{\nabla} \cdot  
     q_{(0), i} = 0 \label{eq:ep_FD_wp_cnstrnt}.
  \end{align}
  Here, $\hat{\nabla}$ is the discrete gradient operator introduced by
  the central numerical fluxes, i.e.\ by replacing the derivatives by
  central differences. 
\end{definition}
\begin{theorem}
  Suppose that the data at time $t^n$ are well-prepared, i.e.\
  $\rho^n$ and $q^n$ admit the decompositions in
  \eqref{eq:ep_FD_wp_decomp}. Then for an SA-DIRK scheme, the
  intermediate solutions $ (\rho_I^k, q^k_I, \phi_I^k)$ satisfy 
  \begin{align}
    \lim_{\veps \to 0} \rho^k_{I,i} &=  \rho^k_{I,(0),i} = 1 \\
    \lim_{\veps \to 0} \hat{\nabla} \cdot q_{I, i}^k &= \hat{\nabla}
    \cdot q^k_{I, (0), i}=0. 
  \end{align}
  Moreover, the numerical solution $(\rho^{n+1}_{i},
  q^{n+1}_{i},\phi^{n+1}_{i})$ after one time-step is consistent
  with the reformulated incompressible limit
  \eqref{eq:ep_mass_ref_lim}-\eqref{eq:ep_div_ref}. Therefore, the 
  scheme  transforms to an $s$-stage scheme for the limit system or in
  other words, the fully discrete scheme in
  Definition~\ref{defn:ep_FD_scheme} is asymptotically consistent. 
\end{theorem}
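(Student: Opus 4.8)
The plan is to transcribe the induction-on-stages argument from the proof of Theorem~\ref{thm:ep_TSD_AP}, replacing the continuous differential operators by the discrete difference operators of \eqref{eq:ep_deltax1}. The fully discrete scheme of Definition~\ref{defn:ep_FD_scheme} is already presented in the elliptically reformulated form, so there is no preliminary equivalence step: one simply passes to the limit $\veps\to0$ in the explicit updates, in the discrete elliptic solve \eqref{eq:ep_sk_eref_schm_dfn_FD_phi}, and in the subsequent explicit evaluations \eqref{eq:ep_sk_eref_schm_dfn_FD_q}--\eqref{eq:ep_sk_eref_schm_dfn_FD_rho}, maintaining the ansatz \eqref{eq:ep_FD_wp_decomp} at every stage.

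First I would treat the base case $k=1$. The explicit stage is trivial, $\rho^1_{E,i}=\rho^n_i$ and $q^1_{E,i}=q^n_i$, so well-preparedness of the data, \eqref{eq:ep_FD_wp_decomp}--\eqref{eq:ep_FD_wp_cnstrnt}, yields $\rho^1_{E,(0),i}=1$ and $\hat{\nabla}\cdot q^1_{E,(0),i}=0$; in particular all reconstructed density states in the Rusanov momentum flux \eqref{eq:ep_q1_flux_FD_ap} equal $1$ at leading order, the wave speeds \eqref{eq:wave_speed1} stay bounded, and the numerical mass viscosity $\mcal{G}^1_{I,m,i}$ vanishes at leading order because $\rho^1_{E,(0)}$ is the constant $1$. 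Letting $\veps\to0$ in \eqref{eq:ep_sk_eref_schm_dfn_FD_phi}, the coefficient $\veps^2+\rho^1_{E,i}$ tends to the constant $1$ and, since $\hat\rho^1_{I,(0),i}=1$, one obtains a discrete Poisson equation for $\phi^1_{I,(0)}$ whose right-hand side is a discrete double divergence of the explicit momentum flux, the finite-difference analogue of \eqref{eq:ep_mass_ref_lim}. Applying the discrete divergence $\sum_m\nu_m\delta_{x_m}$ to the limiting momentum update and substituting this discrete Poisson relation gives $\hat{\nabla}\cdot q^1_{I,(0),i}=0$, and feeding this back into the limiting mass update \eqref{eq:ep_sk_eref_schm_dfn_FD_rho} gives the quasineutrality $\rho^1_{I,(0),i}=1$. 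Thus, in the limit, the first implicit stage is a consistent finite-difference discretisation of the reformulated incompressible Euler system \eqref{eq:ep_mass_ref_lim}--\eqref{eq:ep_div_ref}.

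The inductive step is identical in spirit: assuming $\rho^\ell_{E,(0),i}=\rho^\ell_{I,(0),i}=1$ and $\hat{\nabla}\cdot q^\ell_{E,(0),i}=\hat{\nabla}\cdot q^\ell_{I,(0),i}=0$ for $\ell<k$, the explicit updates give $\rho^k_{E,(0),i}=1$ and $\hat{\nabla}\cdot q^k_{E,(0),i}=0$, the limit of \eqref{eq:ep_sk_eref_schm_dfn_FD_phi} produces the discrete Poisson equation for $\phi^k_{I,(0)}$, and the same divergence-then-substitute manoeuvre on \eqref{eq:ep_sk_eref_schm_dfn_FD_q}--\eqref{eq:ep_sk_eref_schm_dfn_FD_rho} yields $\hat{\nabla}\cdot q^k_{I,(0),i}=0$ followed by $\rho^k_{I,(0),i}=1$. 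Because the underlying DIRK tableau is GSA, the numerical solution after one step equals the $s$-th implicit stage, so $\rho^{n+1}_{(0),i}=1$ and $\hat{\nabla}\cdot q^{n+1}_{(0),i}=0$, i.e.\ the output is again well prepared, and the one-step map collapses in the limit to an $s$-stage finite-difference scheme for \eqref{eq:ep_mass_ref_lim}--\eqref{eq:ep_div_ref}.

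The main obstacle, and the only point requiring genuine work beyond the time semi-discrete proof, is the discrete compatibility underlying the divergence-then-substitute step: one must check that applying $\sum_m\nu_m\delta_{x_m}$ to the discrete gradient term $\rho^k_{E,i}\,\delta_{x_m}\phi^k_{I,i}$ of \eqref{eq:ep_sk_eref_schm_dfn_FD_q} reproduces, up to the $\veps^2$ contribution that disappears in the limit, exactly the variable-coefficient discrete elliptic operator on the left of \eqref{eq:ep_sk_eref_schm_dfn_FD_phi}. This is elementary but slightly delicate bookkeeping with the operators \eqref{eq:ep_deltax1}, including the averaging needed to locate $\rho^k_E$ and the piecewise-linear reconstructions on the appropriate half-integer grid, and it is built into the construction of \eqref{eq:ep_sk_eref_schm_dfn_FD_phi} in Definition~\ref{defn:ep_FD_scheme}. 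Once that identity is in hand, and together with the collapse of the Rusanov dissipation on $q$ and of $\mcal{G}^k_I$ when $\rho^k_{E,(0)}\equiv1$, the argument closes exactly as in the proof of Theorem~\ref{thm:ep_TSD_AP}.
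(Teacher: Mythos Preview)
Your proposal is correct and follows essentially the same route as the paper: the paper's own proof simply records that one combines the induction-on-stages argument of Theorem~\ref{thm:ep_TSD_AP} with the mimetic compatibility of the discrete difference operators as in \cite{AS20}, which is exactly the plan you lay out. One small remark: the Rusanov dissipation on $q$ does not actually collapse when $\rho^k_{E,(0)}\equiv 1$ (only the pressure gradient and $\mcal{G}^k_I$ do), but this is harmless since the discrete elliptic equation \eqref{eq:ep_sk_eref_schm_dfn_FD_phi} was built from the discrete divergence of $\hat{q}^k_I$ including that dissipation, so the cancellation in your divergence-then-substitute step goes through regardless.
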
  
\begin{proof}
The proof follows from a combination of the proof of the AP property
for the time semi-discrete system i.e.\ Theorem~\ref{thm:ep_TSD_AP}
and the use of mimetic operators as done in \cite{AS20}.
\end{proof}

\section{Numerical Case Studies}
\label{sec:ep_numer_case_stud}
In this section we present the results of numerical case studies
carried out with both the high-order schemes developed in this
paper. For all the test cases we impose a combination of
periodic and homogeneous Dirichlet boundary conditions for the
hydrodynamic variables and the electric potential, respectively. The
results presented here focus to numerically corroborate the following
three properties as claimed throughout the paper:  
\begin{enumerate}[(i)]
\item asymptotic consistency with the quasineutral limit system;
\item asymptotic convergence, i.e.\ second order convergence to the
  quasineutral limit solution;
\item superior performance of the SI-IMEX-RK-DAE scheme over high
  order classical schemes.
\end{enumerate}
To this end we consider three test problems each one demonstrating some
of the above stated properties of the SI-IMEX-RK-DAE scheme. Note that
we use the LSDIRK(2,2,2) \cite{BFR16} variant of the SI-IMEX-RK scheme
and the ARS(2,2,2) \cite{PR01} variant of the additively partitioned
IMEX-RK scheme. Note that both these variants are stiffly accurate and
L-stable and their Butcher tableaux are given in
Figure~\ref{fig:ep_imexARS_LSD}. 
\begin{figure}[htbp]
  \small
  \centering
  \begin{tabular}{c|c c c}
    0		& 0			& 0			& 0	\\
    $\gamma$	& $\gamma$		& 0 			& 0	\\
    $1$	& $\delta$		& $1 - \delta$		& 0	\\
    \hline 
                & $\delta$	& $1 - \delta$	& $0$
  \end{tabular}
  \hspace{10pt}
  \begin{tabular}{c|c c c}
    $0$	& $0$		& 0			& 0		\\
    $\gamma$	& $0$		& $\gamma$ 		& 0		\\
    $1$	& $0$		& $1 - \gamma$		& $\gamma$	\\
    \hline 
        & $0$		& $1 - \gamma$		& $\gamma$
  \end{tabular}
  \hspace{30pt}
  \begin{tabular}{c|c c}
    0		& 0			& 0\\
    $\sigma$	& $\sigma$		& 0 \\
    \hline 
                &$1 - \gamma$	& $\gamma$
  \end{tabular}
  \hspace{10pt}
  \begin{tabular}{c|c c}
    $\gamma$	& $\gamma$ & $0$\\
    $1$	& $1 - \gamma$		& $\gamma$	\\
    \hline 
        & $1 - \gamma$		& $\gamma$
  \end{tabular}
  \caption{Double Butcher tableaux of the IMEX schemes. Left: ARS
    (2,2,2), and Right: LSDIRK(2,2,2). Here, $\gamma=1-\sqrt{2}/2$,
    $\sigma = 1 / 2 \gamma$ and $\delta=1 - \sigma $.}    
  \label{fig:ep_imexARS_LSD}
\end{figure}
\subsection{A Small Perturbation of a Uniform Quasineutral Plasma}
\label{sec:ep_numer_case_stud_P1}
We consider a quasineutral state where the density $\rho$ is
a constant equal to one and the plasma is assumed to move in the
horizontal direction with a uniform constant velocity one; see
\cite{CDV07}. The constant value of density combined with the Poisson
equation implies that the electric potential vanishes, i.e.\ $\phi =
0$. The aim of this case study is to test the AP scheme's ability to
recover a quasineutral background state independent of the mesh-size:
resolving or not resolving the parameter $\veps$. To this end, a small
cosine perturbation of magnitude $\delta = \veps^2$ is added to the
constant horizontal velocity $u_1$. The initial conditions read  
\begin{equation}
  \rho (0, x_1) = 1.0, \ u_1(0, x_1) = 1.0 + \delta \cos 2 \pi x_1, \
  \phi (0, x_1) = 0.0. 
\end{equation}
Note that the perturbation chosen here pertains to a set of
well-prepared initial data cf.\ Definition \ref{defn:ep_wellprep}. The
parameter $\veps$ is chosen as $\veps = 10^{-4}$ and hence the plasma
frequency $\omega=1/\veps= 10^4$. The plasma is considered in the
domain $[0,1]$. Throughout this test case the AP scheme is subjected
to the advective CFL condition~\ref{eq:ep_CFL}.
The CFL number is set to be $0.45$. 

Figure~\ref{fig:Small_pertur_QN_r_m2} shows the density, velocity and
electric potential at time $t = 0.01$ (100 cycles) computed on a mesh
with $\Delta x = 10^{-4}$, resolving the Debye length. Similarly,
Figure~\ref{fig:Small_pertur_QN_r_m1} shows the above mentioned
components at time $t = 0.1$ (1000 cycles) computed on the same
resolved mesh. The time steps for the classical scheme is not only
restricted by the CFL condition given by \eqref{eq:ep_CFL} but also
by the severe stability restriction \eqref{eq:ep:stab_poisson}. It can
be seen that both the classical and the AP schemes perform quite
well to maintain the well-preparedness of the data over time for each
of the components. 
\begin{figure}[htbp]
  \centering
  \includegraphics[height=0.185\textheight]{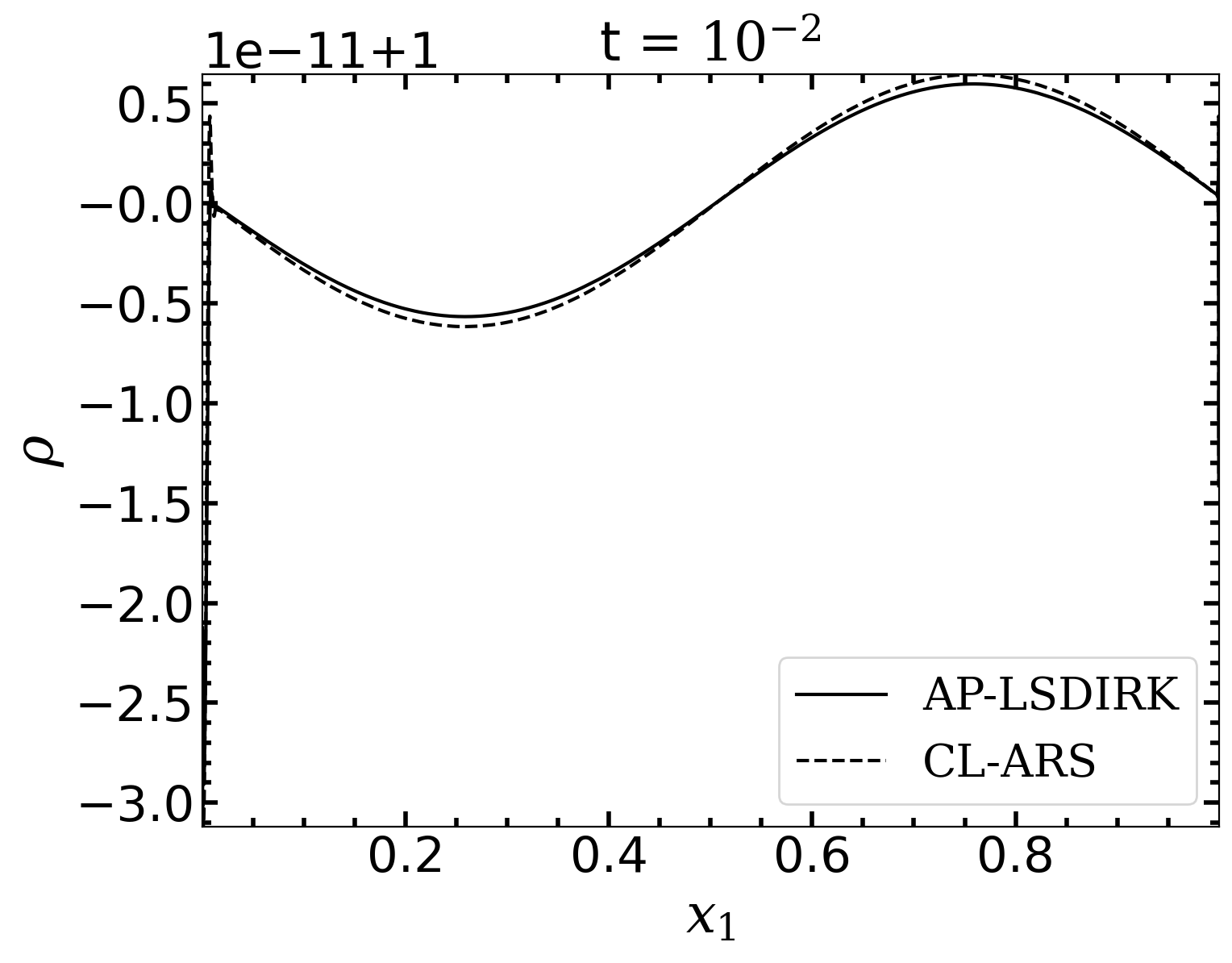}
  \
  \includegraphics[height=0.185\textheight]{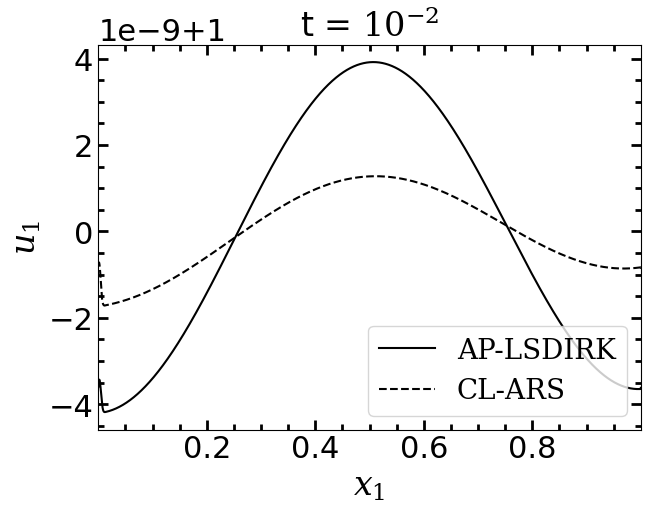}
  \
  \includegraphics[height=0.185\textheight]{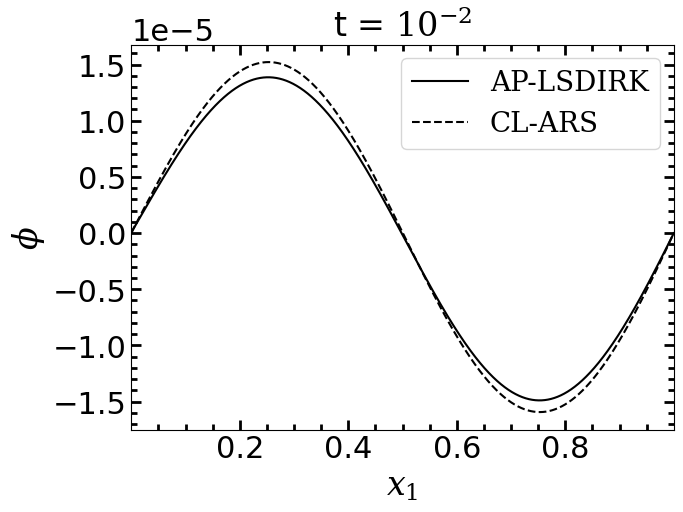}
  \caption{Small perturbation of a quasineutral state with AP and
    Classical Schemes. $\veps = 10^{-4}$,
    $N = 10^4$ (Resolved mesh). Left: Density Profile, Center:
    Velocity Profile, Right: Electric Potential Profiles, at time $t
    = 10^{-2}$.}  
  \label{fig:Small_pertur_QN_r_m2}
\end{figure}
\begin{figure}[htbp]
  \centering
  \includegraphics[height=0.185\textheight]{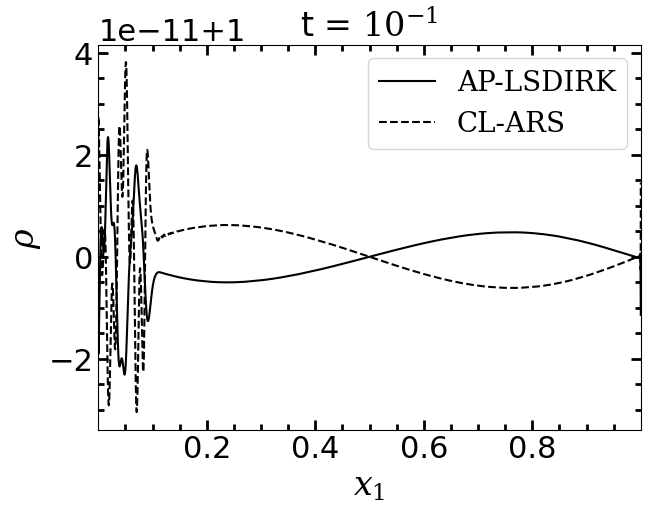}
  \ 
  \includegraphics[height=0.185\textheight]{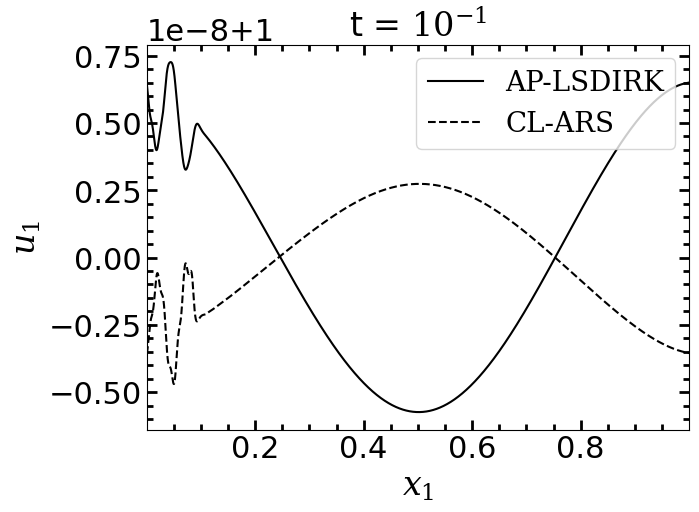}
  \
  \includegraphics[height=0.185\textheight]{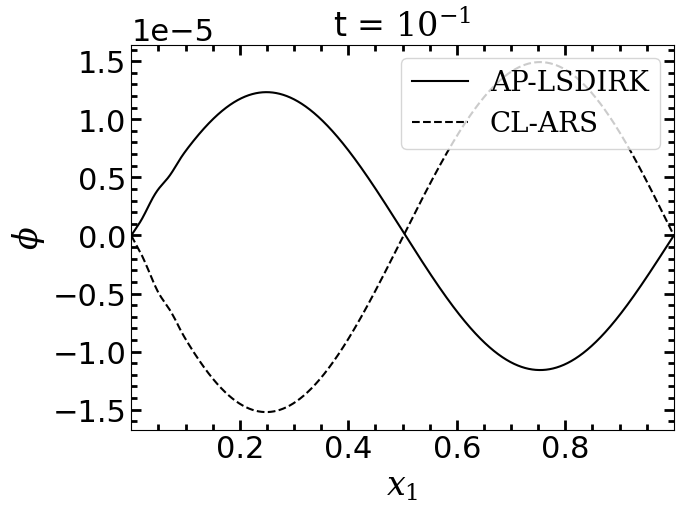}
  \caption{Small perturbation of a quasineutral state with AP and
    Classical Schemes. $\veps = 10^{-4}$,
    $N = 10^4$ (Resolved mesh). Left: Density Profile, Center:
    Velocity Profile, Right: Electric Potential Profiles, at time $t
    = 10^{-1}$.}  
  \label{fig:Small_pertur_QN_r_m1}
\end{figure}

Figure~\ref{fig:Small_pertur_QN_ur_m2} shows the density, velocity and
electric potential at time $t=0.01$ computed on a coarse mesh with
$\Delta x = 10^{-2}$, not resolving the Debye
length. Figure~\ref{fig:Small_pertur_QN_ur_m1} shows the corresponding 
components at time $t = 0.1$ on the same coarse mesh. As was done
in the case of the fine mesh resolving the Debye length here also the
classical scheme is subjected to the CFL conditions in
\eqref{eq:ep_CFL} and \eqref{eq:ep:stab_poisson}. Both the figures
convey that the classical as well as the AP scheme perform well
and retain the well-preparedness aspect of the solution observed with
fine mesh computations. 
\begin{figure}[htbp]
  \centering
  \includegraphics[height=0.185\textheight]{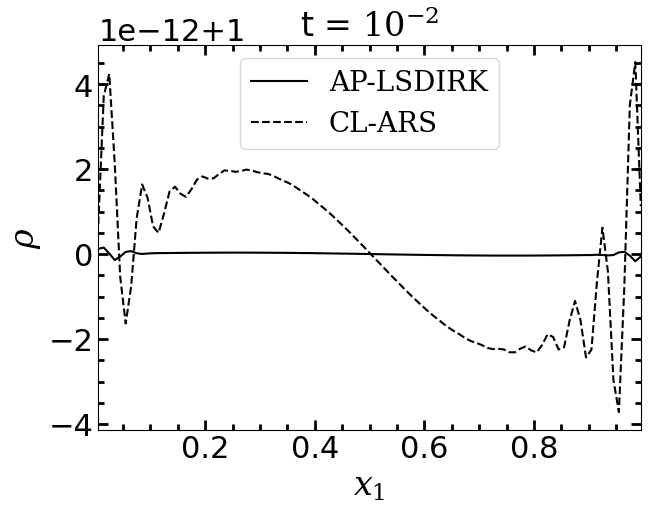}
  \
  \includegraphics[height=0.185\textheight]{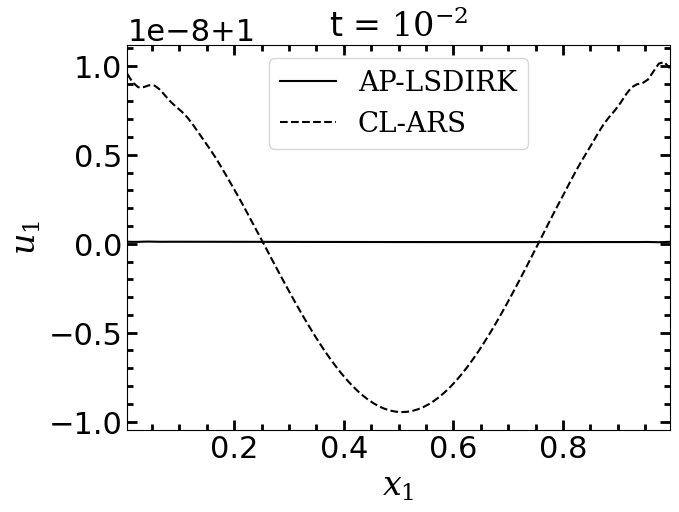}
  \
  \includegraphics[height=0.185\textheight]{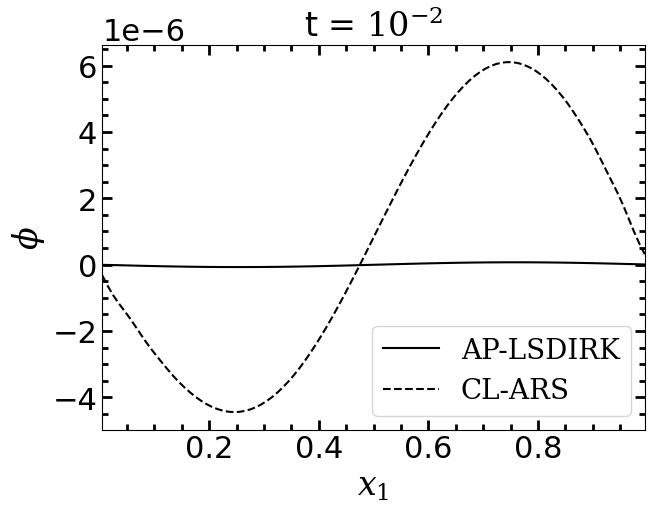}
  \caption{Small perturbation of a quasineutral state with AP ($\Delta
    t > \veps$) and
    Classical Schemes ($\Delta t < \veps$). $\veps = 10^{-4}$,
    $N = 10^2$ (Unresolved mesh). Left: Density Profile, Center:
    Velocity Profile, Right: Electric Potential Profiles, at time $t
    = 10^{-2}$.}  
  \label{fig:Small_pertur_QN_ur_m2}
\end{figure}

\begin{figure}[htbp]
  \centering
  \includegraphics[height=0.185\textheight]{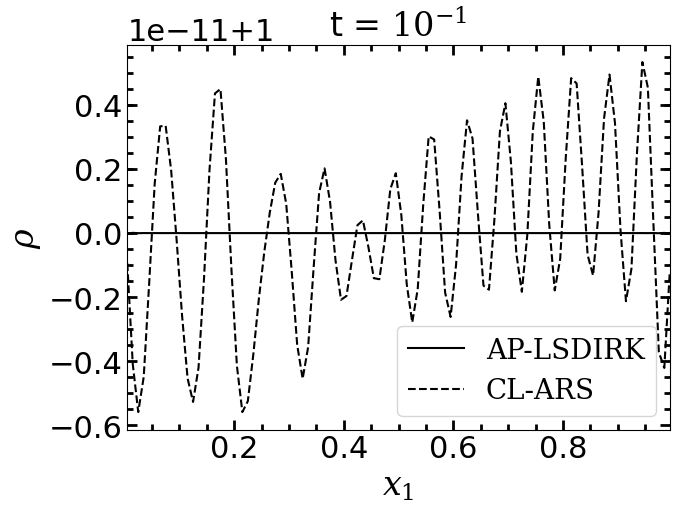}
  \ 
  \includegraphics[height=0.185\textheight]{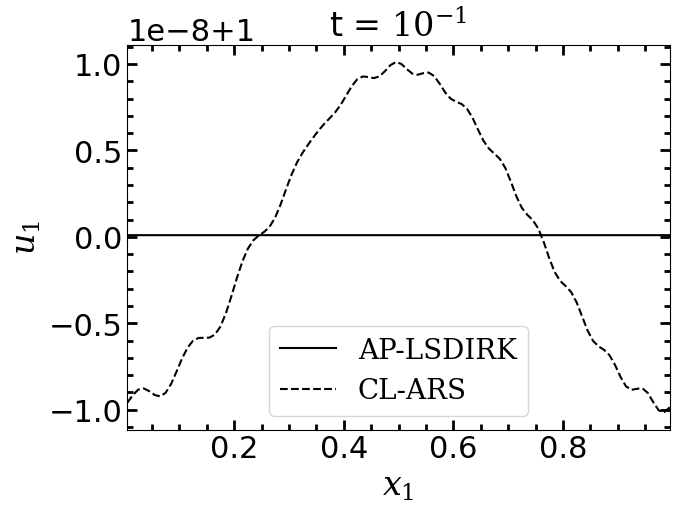}
  \
  \includegraphics[height=0.185\textheight]{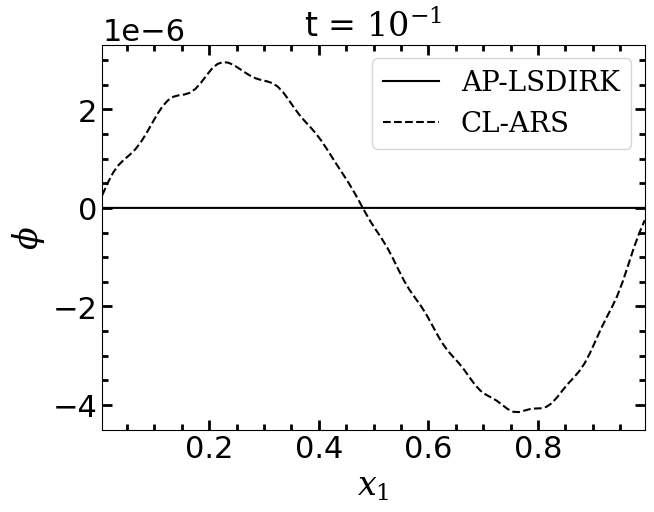}
  \caption{Small perturbation of a quasineutral state with AP ($\Delta
    t > \veps$) and
    Classical Schemes ($\Delta t < \veps$). $\veps = 10^{-4}$,
    $N = 10^2$ (Unresolved mesh). Left: Density Profile, Center:
    Velocity Profile, Right: Electric Potential Profiles, at time $t
    = 10^{-1}$.}  
  \label{fig:Small_pertur_QN_ur_m1}
\end{figure}
\begin{remark}
  Note that in the numerical case studies presented until now the
  classical scheme is restricted to a rather stringent stability
  restriction and its performance which is seen to be similar to that 
  of the AP scheme in case of the coarse mesh can be attributed to
  this strict time-stepping criteria.  
\end{remark}
Owing to the above remark we subject the classical scheme to satisfy
the CFL condition \eqref{eq:ep_CFL} only, and therefore $\Delta t >
\veps$ on a coarse mesh $\Delta x =
10^{-2}$. Figure~\ref{fig:Small_pertur_QN_ur_bu} shows that the  
numerical solution blows up after a very small time $t = 6.1
\times10^{-3} $and hence inferring that the classical scheme is
unstable for $\Delta t > \veps$. Therefore, the AP scheme is able to
capture a solution close to the stationary state with a coarse mesh,
whereas the classical scheme needs a very small time step to be
stable, otherwise it blows up very rapidly. This is in agreement to
the results provided in \cite{CDV07} for the first order classical
scheme for the two-fluid EP system. 
\begin{figure}[htbp]
  \centering
  \includegraphics[height=0.28\textheight]{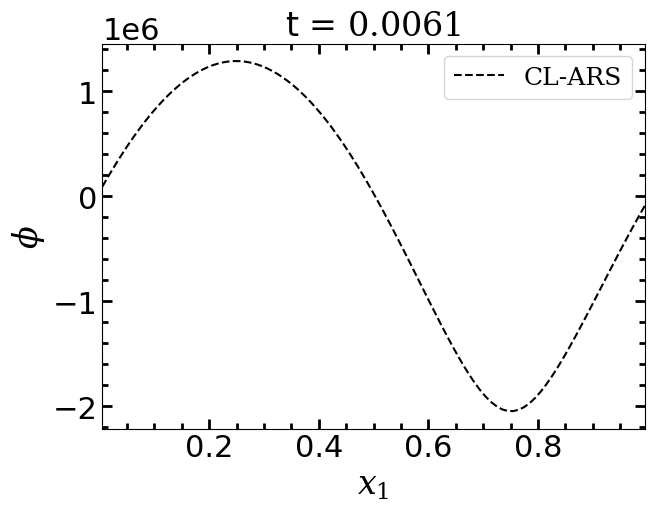} 
\caption{Small perturbation of a  quasineutral state with Classical
  scheme ($\Delta t > \veps$). $\veps = 10^{-4}$, $N = 10^2$
  (unresolved mesh). Electric Potential Profiles, at time $t  =0.0061$}  
  \label{fig:Small_pertur_QN_ur_bu}
\end{figure}

\subsection{Slight Perturbation of a Maxwellian}
Here we consider a non-well prepared initial data. To this end we
consider a test case from \cite{Neg13} where a small perturbation of
amplitude $\delta = 10^{-2}$ to a Maxwellian was considered. The
initial density profile reads:
\begin{equation}
  \rho(0,x_1) = 1.0 + \delta \sin (\kappa  \pi  x_1),
\end{equation}
where the frequency $\kappa = 2220$. The value of $\kappa$ is chosen
such that $\kappa \sim  \veps^{-1}$ to ensure that the wavelength of the
density perturbation is of the same order as the Debye length. This is a
slight perturbation of a stable equilibrium. The expectation from a
numerical scheme is to be able to damp out the oscillations so that
the electric field converges in time towards zero, i.e. the
equilibrium is recovered in the long-time asymptotic.

Here we test both the AP and the classical schemes on a coarse
mesh of 100 mesh points i.e. the mesh size $\Delta x$ not resolving
$\veps = 10^{-4}$ the Debye length. The AP scheme is subjected
only only to the advective CFL condition \eqref{eq:ep_CFL}. But to
adhere to the extra stability requirements of classical schemes, in
addition to \eqref{eq:ep_CFL} the classical scheme is also subjected
to the stability criteria
\eqref{eq:ep:stab_poisson}. Figure~\ref{fig:Small_max_pert_unres} 
shows the density and the electric potential profiles for the AP
scheme and the classical scheme at time $t=0.1$. From this figure we
conclude that the classical scheme even after being put under a far
restrictive time stepping criteria is not able to converge
asymptotically to the equilibrium state on a coarse mesh. Whereas the
AP scheme performs exceedingly well under a very relaxed CFL condition
to obtain a solution close to the quasineutral state. 
\begin{figure}[htbp]
  \centering
  \includegraphics[height=0.283\textheight]{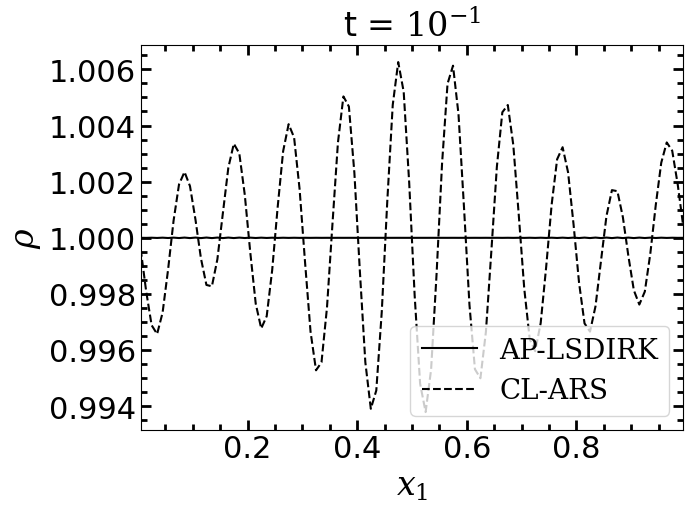}
  \
  \includegraphics[height=0.283\textheight]{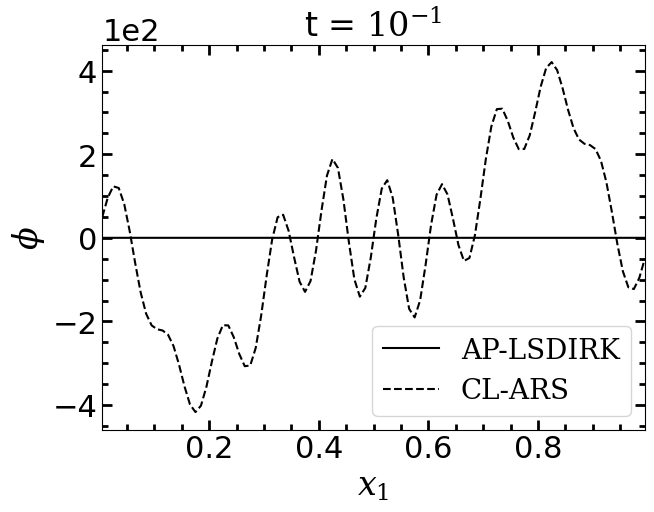}
  \caption{Slight perturbation of a  Maxwellian with AP and Classical
    schemes. $\veps = 10^{-4}$,
    $N = 10^2$ (Unresolved mesh).  Left:Density Profile and
    Right:Electric Potential Profile, at time = 0.1}  
  \label{fig:Small_max_pert_unres}
\end{figure}
\begin{remark}
  Owing to the Remark~1.1 in \cite{Deg13}, we make the
  observation that even if the data is non-well prepared the AP scheme
  is able to damp out the oscillation on a coarse mesh, thereby
  leading towards a robust and a computationally efficient 
  numerical method. 
\end{remark}

\subsection{Asymptotic Order of Convergence}
\label{sec:ep_numer_case_stud_P2}
The goal of this experiment is to numerically validate the second
order asymptotic convergence of the numerical solution to the
incompressible limit solution. Let $v_{\veps}(t^n)$ and $v_{0}(t^n)$
be the solution of the EP system
\eqref{eq:ep_nd_mass}-\eqref{eq:ep_nd_poi} and the incompressible
limit system \eqref{eq:ep_mass_lim}-\eqref{eq:ep_poi_lim_},
respectively, at time $t^n$. Let $v_{\veps}^n$ be the numerical
solution obtained using a second order accurate AP scheme for the EP
system \eqref{eq:ep_nd_mass}-\eqref{eq:ep_nd_poi} at the same
time. Then we expect the following estimates:
\begin{align}
v_{\veps}^{n} - v_{0}(t^n) &= v_{\veps}^{n} -v_{\veps}(t^n)  +
  v_{\veps}(t^n)  - v_{0}(t^n) +  v_{0}(t^n) - v_{0}(t^n) \\
  & = \mcal{O}(\Dlt^2) + \mcal{O}
                                        (\veps) + \mcal{O}(\Dlt^2).
\end{align}
Therefore from the above we get for $\veps \to 0$ we have:
\begin{equation}
  v_{\veps}^{n} - v_{0}(t^n)  = \mcal{O}(\Dlt^2) 
\end{equation}
Consequently, we compute the asymptotic order of convergence, as is
defined in \cite{AS20}, as the experimental rate of convergence of a
numerical scheme with respect to the asymptotic limit
solution. Therefore the exact solution, used as a reference, is the
solution of the incompressible limit
\eqref{eq:ep_mass_lim}-\eqref{eq:ep_poi_lim_}.

To this end we take the initial conditions as considered in
Section~\ref{sec:ep_numer_case_stud_P1} with a modification in the
simulation domain and the value of the perturbation parameter. Here we
expand the plasma domain so that the initial perturbation can travel
within the domain before encountering the boundary conditions at the
domain walls. Consequently, the spatial domain is set to be the
interval $[0, 10]$, and the perturbation parameter is increased to
$\delta = 10^{-2}$.

Here we have consider three different values of $\veps \in \{10^{-4},
10^{-5}, 10^{-6} \}$, all in the asymptotic
range. Table~\ref{table:ep_AOC} displays the second order convergence
of the SI-IMEX-RK-DAE scheme in the quasineutral regime. From
Table~\ref{table:ep_AOC} we observe that even though the error is 
stuck around $\mcal{O}(\veps)$ still the AP scheme is able to recover
the second order convergence. 
\begin{table}
  \begin{tabular}[t] {
    |p{1.3cm}|p{2.2cm}|p{0.8cm}||p{2.2cm}|p{0.8cm}||p{2.2cm}|p{0.8cm}|} 
    \hline
    & $\veps = 10^{-4}$ & & $\veps = 10^{-5}$  & & $\veps = 10^{-6}$
    & \\
    \hline
    $N$& $L^2$ Error in $\phi$ & AOC & $L^2$ Error in
                                                        $\phi$ 
                                               & AOC & $L^2$ Error in $\phi$ & AOC  \\
    \hline
    80  & 1.4641E-01 & & 1.4816E-02 & & 9.7052E-04 & \\
    160 & 1.0083E-02 & 3.85 & 1.0301E-03 & 3.84 & 6.3394E-05 & 3.93\\
    320 & 2.0090E-03 & 2.32 & 2.0869E-04 & 2.30  & 1.3126E-05 & 2.27\\
    640   & 2.6880E-04 & 2.90 & 2.6880E-05 & 2.52 & 2.1804E-06 & 2.58\\
    \hline
  \end{tabular}
  \caption{AOC for SI-IMEX-RK-DAE schemes for $\veps = 10^{-4},
    10^{-5}, 10^{-6}$.}
  \label{table:ep_AOC}
\end{table}

\section{Concluding Remarks}
\label{sec:ch6_con}
In this paper, we have presented a novel procedure which combines two
semi-implicit time discretisation approaches, namely the additively
partitioned IMEX-RK schemes and the SI-IMEX-RK schemes combined with
the direct approach for implicit RK schemes for DAEs, in-order to develop
high-order time semi-discrete schemes for the one-fluid Euler-Poisson
system. The design is leveraged on the fact that via the method of lines, a
linearised version of the governing equations can be perceived as a
system of DAEs. Subsequently, we prove that the time semi-discrete
SI-IMEX-RK-DAE schemes, which are developed on the SI-IMEX-RK platform
are asymptotically consistent with the quasineutral limiting
system. Whereas the schemes based on the additively partitioned IMEX
platform fail to be AP, which is not surprising owing to the analysis of
a first order scheme in \cite{Deg13}. Furthermore, we derive a space-time
fully discrete scheme by approximating the spatial derivatives in a
finite volume framework. Therein we chose a combination of Rusanov-type 
flux and central fluxes for the deemed stiff and the non-stiff terms,
respectively. We show that the space discretisation complements the
time semi-discrete scheme by preserving the AP property at the fully
discrete level. Finally, the results of numerical case studies are provide
to showcase the second order accuracy and the performance of the AP
scheme in the quasineutral and non-quasineutral regimes. The numerical
case studies convey that the AP schemes perform equally well while
resolving or not resolving the Debye length. Whereas the classical
schemes even being put under much stricter stability criterion fail to
replicate the fine mesh results on a coarse mesh. The SI-IMEX-RK
scheme clearly demonstrates the second-order convergence of the
numerical solution to the quasineutral limit solution.

Additionally, we make a note that the schemes developed here are
designed for the EP system, on the account of its simplicity to aid
the analysis and implementation of the same. But, the scheme can be
extended to any stiff system of equations with hyperbolic and elliptic
coupling, in particular for the two-fluid EP system \cite{CDV07}, and
the work is under process and will be reported in the future.

\end{document}